\documentclass[12pt]{amsart}
\usepackage{amsmath,amsfonts,amssymb,amsthm}
\usepackage{anysize}
\marginsize{2cm}{2cm}{2cm}{2cm}
\usepackage{graphicx}

\def\w{\omega}

\def\u{\mathfrak{u}}
\def\vg{\mathfrak{v}}

\def\g{\mathfrak{g}}

\def\h{\mathfrak{h}}

\def\o{\theta}

\def\r{\mathbb{R}}

\def\aff{\mathfrak{aff}}

\def\^{\wedge}

\def\c{\mathbb{C}}

\newcommand{\nc}{\newcommand}
\nc{\ZZ}{{\mathbb Z}}
\nc{\NN}{{\mathbb N}}
\nc{\ad}{\operatorname{ad}}
\nc{\tr}{\operatorname{tr}}
\nc{\I}{\operatorname{Id}}
\nc{\alt}{\raise1pt\hbox{$\bigwedge$}}
\nc{\pint}{\langle \cdotp,\cdotp \rangle }
\nc{\la}{\langle}
\nc{\ra}{\rangle}
\nc{\n}{\noindent}

\theoremstyle{plain}
\newtheorem{teo}{\bf Theorem}[section]
\newtheorem{cor}[teo]{\bf Corollary}
\newtheorem{prop}[teo]{\bf Proposition}
\newtheorem{lema}[teo]{\bf Lemma}

\theoremstyle{definition}

\newtheorem{ejemplo}[teo]{\bf Example}

\theoremstyle{remark}
\newtheorem{rem}[teo]{Remark}
\newtheorem{rems}[teo]{Remarks}

\newcommand{\ri}{{\rm (i)}}
\newcommand{\rii}{{\rm (ii)}}
\newcommand{\riii}{{\rm (iii)}}
\newcommand{\riv}{{\rm (iv)}}
\newcommand{\rv}{{\rm (v)}}

\title[Lattices in LCK and LCS almost abelian Lie groups]{Lattices in almost 
abelian Lie groups with locally conformal K\"ahler or symplectic structures}

\author{A. Andrada}
\email{andrada@famaf.unc.edu.ar}
\author{M. Origlia}
\email{origlia@famaf.unc.edu.ar}

\date{}
\address{FaMAF-CIEM, Universidad Nacional de C\'{o}rdoba, Ciudad Universitaria, 5000 C\'{o}rdoba, 
Argentina}

\subjclass[2010]{22E25, 22E40, 53C15, 53C55, 53D05}
\keywords{Lattice, locally conformal K\"ahler metric, locally conformal symplectic structure, almost abelian Lie group}

\begin{document}

\begin{abstract}
We study the existence of lattices in almost abelian Lie groups that admit left invariant locally 
conformal K\"ahler or locally conformal symplectic structures in order to obtain compact 
solvmanifolds equipped with these geometric structures. In the former case, we show that such 
lattices exist only in dimension $4$, while in the latter case we provide examples of such Lie 
groups admitting lattices in any even dimension.
\end{abstract}

\maketitle

\section{Introduction}

\

The aim of this article is to study the existence of lattices in a family of solvable Lie groups 
equipped with certain left invariant geometric structures, namely, locally conformal K\"ahler 
structures and locally conformal symplectic structures. 

We recall that a $2n$-dimensional Hermitian manifold $(M,J,g)$ is called {\it locally conformal 
K\"ahler} (LCK for short) if $g$ can be rescaled locally, in a neighborhood of any point in $M$, so 
as to be K\"ahler. If $\omega$ denotes the fundamental $2$-form of $(J,g)$ defined by 
$\omega(X,Y)=g(JX,Y)$ for any $X,Y$ vector fields on $M$, it is well known that the LCK condition 
is equivalent to the existence of a closed $1$-form $\theta$ on $M$ such that 
$d\omega=\theta\wedge\omega$. This $1$-form $\theta$ is called the {\it Lee form}. These manifolds 
are a natural generalization of K\"ahler manifolds, and they have been much studied by 
many authors since the work of I. Vaisman in the '70s (see for instance \cite{DO, GMO, MO, O, OV, V1}).

A generalization of this class of manifolds is given by the so-called {\it locally conformal
symplectic} (LCS) manifolds, i.e., those manifolds carrying a non-degenerate $2$-form $\omega$
satisfying $d\omega=\theta\wedge\omega$ for some closed $1$-form $\theta$. Locally conformal
symplectic manifolds were considered by Lee in \cite{L} and they have been firstly studied by
Vaisman in \cite{V}. Some recent results can be found in \cite{Ba,Ha,HR,LV}, among others.

\smallskip

Any left invariant LCK or LCS structure on a simply connected Lie group $G$ gives rise naturally 
to an LCK or LCS structure on a quotient $\Gamma\backslash G$ of $G$ by a discrete subgroup. In 
this article we will consider solvable Lie groups $G$ and discrete subgroups $\Gamma$ such that 
$\Gamma\backslash G$ is compact, i.e., $\Gamma$ is a lattice in $G$; the compact quotient is called 
a solvmanifold. In general, it is difficult to determine whether a given Lie group admits lattices. 
However, there are two classes of Lie groups for which a criterion exists: for nilpotent Lie 
groups, there is the well known Malcev's theorem \cite{Ma}, while for almost abelian Lie groups 
there is a characterization by Bock (\cite{B}, see Proposition \ref{latt} below). We recall that a Lie group 
is called {\it almost abelian} when its Lie algebra has an abelian ideal of codimension one. Left 
invariant LCK or LCS structures on nilpotent Lie groups have been thoroughly studied in \cite{
BM,S}, therefore we will focus on the class of almost abelian Lie groups. This class has appeared 
recently in several different contexts (see for instance \cite{AGMP, B, CM, LR,LW}).

\smallskip

LCK and LCS structures on Lie groups and Lie algebras (and also in their compact quotients 
by discrete subgroups, if they exist) have been considered by several authors lately. For instance, 
it was shown in \cite{S} that an LCK nilpotent Lie algebra is isomorphic to 
$\mathfrak h_{2n+1}\times\mathbb R$, where $\mathfrak h_{2n+1}$ denotes the $(2n+1)$-dimensional 
Heisenberg Lie algebra. In \cite{K} it is proved that some solvmanifolds with left invariant 
complex structures do not admit Vaisman metrics, i.e. LCK metrics with parallel Lee form. 
Moreover, it is shown that some Oeljeklaus-Toma manifolds (used to disprove a conjecture by I. 
Vaisman) are in fact solvmanifolds with invariant LCK structures. In \cite{ACHK} it is proved 
that the any reductive Lie group admitting left invariant LCK metrics are locally isomorphic to 
$U(2)$ and $GL(2,\r)$. In \cite{AO} the authors prove that if a solvmanifold with an abelian 
complex structure (i.e. $[Jx,Jy]=[x,y]$ for any $x,y$ in the corresponding Lie algebra) admits an 
invariant LCK metric then the Lie algebra is isomorphic to 
$\mathfrak h_{2n+1}\times\mathbb R$. In \cite{BM} a structure theorem for Lie 
algebras admitting LCS structures of the first kind is given, and $6$-dimensional LCS nilpotent Lie 
algebras are classified. 

\smallskip

The outline of this article is as follows. In Section $2$ we review some known results about LCK
and LCS structures and almost abelian Lie groups. 

In Section $3$ we characterize the Lie algebras of the almost abelian Lie groups that admit a left 
invariant LCK structure (Theorem \ref{lcK}) in any even dimension $\geq 4$. Moreover, we determine 
whether these Lie groups admit lattices, proving that this happens only in dimension $4$ (Theorem 
\ref{6-lattices}), using the criterion for the existence of lattices in almost abelian Lie groups 
given in \cite{B}. In particular, there is a one-parameter family of unimodular almost abelian 
$4$-dimensional Lie groups with left invariant LCK structures, with countably many of them 
admitting lattices (Theorem \ref{4-lattice}). The $4$-dimensional solvmanifolds thus obtained are 
Inoue surfaces of type $S^0$ (see \cite{H}).

In Section $4$ we study LCS structures on almost abelian Lie algebras and we get a characterization 
of these Lie algebras in any even dimension (see Theorem \ref{lcs} for dimension $6$ or higher and 
Theorem \ref{4-lcs} for dimension $4$). Furthermore, we establish which of these LCS structures 
are of the first kind or of the second kind. We also determine whether the $4$-dimensional Lie 
groups associated to these Lie algebras admit lattices (Theorem \ref{4-lcs-latt}) and finally, 
we build in each dimension greater than or equal to $6$ a family of almost abelian solvmanifolds 
admitting an LCS structure of the second kind which carry no invariant complex structure. Moreover, 
we determine the Betti numbers associated to the de Rham cohomology and the adapted (or 
Lichnerowicz) cohomology of these solvmanifolds in low dimensions. 

\

\noindent \textbf{Acknowledgements.} We would like to thank A.~Moroianu for his help with  
integer polynomials and I.~Dotti for useful comments and suggestions.
The authors were partially supported by CONICET, ANPCyT and SECyT-UNC 
(Argentina).

\

\section{Preliminaries}

\subsection{Locally conformal structures on manifolds}

Let $(M,J,g)$ be a $2n$-dimensional Hermitian manifold, where $J$ is a complex structure and
$g$ is a Hermitian metric. $(M,J,g)$ is a {\em locally conformal K\"ahler} manifold (LCK
for short) if there exists an open covering $\{ U_i\}_{i\in I}$ of $M$ and a family $\{ f_i\}_{i\in
I}$ of $C^{\infty}$ functions, $f_i:U_i \to \r$, such that each local metric 
\begin{equation}\label{gi} 
g_i=\exp(-f_i)\,g|_{U_i} 
\end{equation} 
is K\"ahler. Also $(M,J,g)$ is {\em globally conformal K\"ahler} (GCK) if there exists a
$C^{\infty}$ function, $f:M\to\r$, such that the metric $\exp(-f)g$ is K\"ahler.

An equivalent characterization of an LCK manifold can be given in terms of the fundamental form
$\omega$, which is defined by $\omega(X,Y)=g(JX,Y)$, for all $X,Y \in \mathfrak{X}(M)$. Indeed, a
Hermitian manifold $(M,J,g)$ is LCK if and only if there exists a closed $1$-form $\theta$
globally defined on $M$ such that 
\begin{equation}\label{lck}
d\omega=\theta\wedge\omega.
\end{equation} 
This closed $1$-form $\theta$ is called the \textit{Lee form} (see \cite{L}). The Lee form is 
completely determined by $\omega$, and there is an explicit formula given by
\begin{equation}\label{lee}
\theta=-\frac{1}{n-1}(\delta\omega)\circ J, 
\end{equation} 
where $\delta$ is the codifferential and $2n$ is the dimension of $M$.

It can be seen that $(M,J,g)$ is LCK if and only if
\[(\nabla_XJ)Y=\frac{1}{2}\left\{\theta(JY)X-\theta(Y)JX+g(X,Y)J\theta^\#+\omega(X,Y)\theta^\#\right
\}, \] for all 
$X,Y\in \mathfrak{X}(M)$,
where $\theta^\#$ is the dual vector field of the $1$-form $\theta$ and $\nabla$ is the Levi-Civita
connection associated to $g$. This shows that LCK manifolds belong to the class $\mathcal{W}_4$
of the Gray-Hervella classification of almost Hermitian manifolds \cite{GH}.

\

A generalization of an LCK structure is given by a \textit{locally conformal symplectic} 
structure (LCS for short), that is, a non degenerate $2$-form $\omega$ on the manifold $M$ such that 
there exists an open cover $\{U_i\}$ and smooth functions $f_i$ on $U_i$ such that 
\[\omega_i=\exp(-f_i)\omega\] 
is a symplectic form on $U_i$. This condition is equivalent to requiring that
\[d\omega=\theta\wedge\omega\] for some closed $1$-form $\theta$, called again the Lee form. 
Moreover, $M$ is called globally conformal symplectic (GCS) if there exist  a $C^{\infty}$ 
function, $f:M\to\r$, such that $\exp(-f)g$ is a symplectic form. Equivalently, $M$ is a GCS 
manifold if there exists a exact $1$-form $\theta$ globally defined on $M$ such that 
$d\omega=\theta\wedge\omega$.   

We note that in the LCS case the Lee form is also uniquely determined by the non degenerate 
$2$-form $\omega$, but there is not an explicit formula for $\theta$ such as \eqref{lee} in the LCK 
case. The pair $(\omega, \theta)$ will be called an LCS structure on $M$.

It is well known that
\begin{itemize}
 \item If $(\omega,\theta)$ is an LCS structure on $M$, then $\omega$ is symplectic if and only if 
$\theta=0$. Indeed, $\theta\wedge\omega=0$ and $\omega$ non degenerate imply $\theta=0$. 
Accordingly, an LCK structure $(J,g)$ is K\"ahler if and only if $\theta=0$.
 \item If $\omega$ is a non degenerate $2$-form on $M$, with $\dim M\ge 6$, such that \eqref{lck}
holds for some $1$-form $\theta$ then $\theta$ is automatically closed and therefore $M$ is LCS.
\end{itemize}

\medskip

We recall next a definition due to Vaisman (\cite{V}). If $(\omega, \theta)$ is an LCS 
structure on $M$, a vector field $X$ is called an infinitesimal automorphism of 
$(\omega,\theta)$ if $\textrm{L}_X\omega=0$, where $\textrm{L}$ denotes the Lie derivative. This 
implies $\textrm{L}_X\theta=0$ as well and, as a consequence, $\theta(X)$ is a constant function on 
$M$. If there exists an infinitesimal automorphism $X$ such that $\theta(X)\neq 0$, the LCS 
structure $(\omega,\theta)$ is said to be of {\em the first kind}, and it is of {\em the second 
kind} otherwise.

Since $\theta$ is closed, we can deform the de Rham differential $d$ to obtain the adapted 
differential operator 
\[d_\theta \alpha= d\alpha -\theta\wedge\alpha.\]
This operator satisfies $d_\theta^2=0$, thus it defines the {\em adapted cohomology} 
$H_\theta^*(M)$ of $M$ relative to the closed $1$-form $\theta$, also called the Lichnerowicz 
cohomology. It is known that if $M$ is a compact oriented $n$-dimensional manifold, then 
$H_\theta^0(M)= H_\theta^n(M)=0$ for any non exact closed $1$-form $\theta$ (see for instance 
\cite{GL,Ha}). For any LCS structure $(\omega,\theta)$ on $M$, the $2$-form $\omega$ 
defines a cohomology class $[\omega]_\theta\in H_\theta^2(M)$, since 
$d_\theta\omega=d\omega-\theta\wedge \omega=0$. It was proved in \cite{V} that if the LCS structure 
is of the first kind then $\omega$ is $d_\theta$-exact, i.e. $[\omega]_\theta=0$. 

\

\subsection{Left invariant LCK and LCS structures on Lie groups and its compact quotients}

Let $G$ be a Lie group with a left invariant Hermitian structure $(J,g)$. If $(J,g)$ satisfies 
the LCK condition \eqref{lck}, then $(J,g)$ is called a {\em left invariant LCK structure} on the 
Lie group $G$. Clearly, the fundamental $2$-form is left invariant and, using \eqref{lee}, it is 
easy to see that the corresponding Lee form $\theta$ on $G$ is also left invariant. 
 
This fact allows us to define LCK structures on Lie algebras. We recall that a {\em complex
structure J} on a Lie algebra $\g$ is an endomorphism $J: \g \to \g$ satisfying $J^2=-\I$ and 
\[ N_J=0, \quad  \text{where} \quad N_J(x,y)=[Jx,Jy]-[x,y]-J([Jx,y]+[x,Jy]),\]        
for any $x,y \in \g$. 

Let $\g$ be a Lie algebra, $J$ a complex structure and $\pint$ a Hermitian inner product on $\g$, 
with $\w\in\alt^2\g^*$ the fundamental $2$-form. We say that $(\g,J,\pint)$ is {\em locally 
conformal K\"ahler} (LCK) if there exists $\theta \in \g^*$, with $d\theta=0$, such that
\begin{equation} \label{g-lck-0}
d\w=\o\wedge\w.
\end{equation}

In the same fashion, an LCS structure $(\omega,\theta)$ on a Lie group $G$ is called left invariant 
if $\omega$ is left invariant, and this easily implies that $\theta$ is also left invariant. 
Accordingly, we say that a Lie algebra $\g$ admits a {\em locally conformal symplectic} (LCS) 
structure if there exist $\omega\in\alt^2\g^*$ and $\theta \in \g^*$, with $\omega$ non degenerate 
and $\theta$ closed, such that \eqref{g-lck-0} is satisfied.

As in the case of manifolds we have that an LCS structure $(\omega,\theta)$ on a Lie algebra $\g$ 
can be of the first kind or of the second kind. Indeed, let us denote by $\g_\omega$ the set of 
infinitesimal automorphisms of the LCS structure, that is, 
\begin{equation}\label{autom}
\g_\omega = \{x\in\g: \textrm{L}_x\omega=0\} = \{x\in\g: \omega([x,y],z)+\omega(y,[x,z])=0 \; \text{for all} \; y,z\in\g\}.
\end{equation}
Note that $\g_\omega \subset \g$ is a Lie subalgebra, thus the restriction of $\theta$ to 
$\g_\omega$ is a Lie algebra morphism called {\em Lee morphism.} The LCS structure $ 
(\omega,\theta)$ is said to be {\em of the first kind} if the Lee morphism is surjective, and {\em 
of the second kind} if it is identically zero (see \cite{BM}).

\smallskip

For a Lie algebra $\g$ and a closed $1$-form $\theta\in\g^*$ we also have the adapted cohomology 
$H_\theta^*(\g)$ defined by the differential operator \[d_\theta \alpha= d\alpha 
-\theta\wedge\alpha,\] 
on $\alt^* \g^*$. According to \cite{Mil}, this adapted cohomology coincides with the Lie algebra 
cohomology of $\g$ with coefficients in a $1$-dimensional $\g$-module $V_{\theta}$, where the 
action of $\g$ on $V_{\theta}$ is given by
\begin{equation}\label{act}
 Xv=-\theta(X)v, \quad X\in\g,\, v\in V_{\theta}.
\end{equation}
The fact that $\theta$ is closed guarantees that this is a Lie algebra representation. 

\medskip

If the Lie group is simply connected then any left invariant LCK or LCS structure turns 
out to be globally conformal to a K\"ahler or symplectic structure. Therefore we will study 
compact quotients of such a Lie group by discrete subgroups, which will be non 
simply connected and will inherit an LCK or LCS structure. Recall that a discrete subgroup 
$\Gamma$ of a simply connected Lie group $G$ is called a \textit{lattice} if the quotient 
$\Gamma\backslash G$ is compact.  The quotient $\Gamma\backslash G$ is known as a solvmanifold if 
$G$ is solvable and as a nilmanifold if $G$ is nilpotent, and in these cases we have that 
$\pi_1(\Gamma\backslash G)\cong \Gamma$. We note that an LCS structure of the first kind on a Lie 
algebra induces an LCS structure of the first kind on any compact quotient of the corresponding 
simply connected Lie group by a discrete subgroup.

\smallskip

In the case when $G$ is completely solvable, i.e. it is a solvable Lie group such that 
the endomorphisms $\ad_X$ of its Lie algebra $\g$ have only real eigenvalues for all $X\in \g$, the
de Rham and adapted
cohomology of $\Gamma\backslash G$ can be computed in terms of the cohomology of $\g$. Indeed, 
Hattori proved in \cite{Hat} that if $V$ is a finite dimensional triangular\footnote{A $\g$-module 
$V$ is called triangular if the endomorphisms of $V$ defined by $v\mapsto Xv$ have only real 
eigenvalues for any $X\in\g$.} $\g$-module, then $\overline{V}:=C^\infty(\Gamma\backslash 
G)\otimes V$ is a $\mathfrak{X}(\Gamma\backslash G)$-module and there is an isomorphism 
\begin{equation}\label{kill}
 H^*(\g,V) \cong H^*(\mathfrak{X}(\Gamma\backslash G), \overline{V}).
\end{equation}
Therefore:
\begin{itemize}
 \item If $V=\r$ is the trivial $\g$-module, then the right-hand side in \eqref{kill} gives 
the usual de Rham cohomology of $\Gamma\backslash G$, so that 
\begin{equation}\label{deRham}
H^*(\g) \cong H^*_{dR}(\Gamma\backslash G).
\end{equation}
 \item If $V=V_\theta$ with the action given by \eqref{act}, then we can identify 
$\overline{V}$ with $C^\infty(\Gamma\backslash G)$ and the action of $\mathfrak{X}(\Gamma\backslash 
G)$ on $C^\infty(\Gamma\backslash G)$ is given by 
\[ X\cdot f= Xf-\theta(X)f, \qquad X\in\g, \, f\in C^\infty(\Gamma\backslash G).\]
Here we are using that there is a natural inclusion $\g \hookrightarrow 
\mathfrak{X}(\Gamma\backslash G)$ and a bijection $C^\infty(\Gamma\backslash G)\otimes \g \to 
\mathfrak{X}(\Gamma\backslash G)$ given by $f\otimes X \mapsto fX$. As a consequence, in this case 
\eqref{kill} becomes (cf. \cite[Corollary 4.1]{Mil})
\begin{equation}\label{kill_adapted}
 H^*_\theta(\g)\cong H^*_\theta(\Gamma\backslash G).
\end{equation}
\end{itemize}
In particular, $H^*_{dR}(\Gamma\backslash G)$ and $H^*_\theta(\Gamma\backslash G)$ do not depend on 
the lattice $\Gamma$.

\

\subsection{Almost abelian Lie groups}

In this article we will focus on a family of solvable Lie groups, namely, the almost abelian ones. 
We recall that a Lie group $G$ is said to be {\em almost abelian} if its Lie algebra $\g$ has a 
codimension one abelian ideal. Such a Lie algebra will be called almost abelian, and it can be 
written as $\g= \r f_1 \ltimes_{\ad_{f_1}} \mathfrak{u}$, where $\mathfrak u$ is an abelian ideal of 
$\g$, and $\r$ is generated by $f_1$. Accordingly, the Lie group $G$ is a semidirect product 
$G=\r\ltimes_\phi \r^d$ for some $d\in\mathbb N$, where the action is given by 
$\phi(t)=e^{t\ad_{f_1}}$. We point out that an almost abelian Lie algebra is nilpotent if and only 
if the operator $\ad_{f_1}|_{\mathfrak u}$ is nilpotent. 

Regarding the isomorphism classes of almost abelian Lie algebras, it can be proved that  
\begin{lema}\label{ad-conjugated}
Two almost abelian Lie algebras $\g_1=\r f_1\ltimes_{\ad_{f_1}} \u_1$ and 
$\g_2=\r f_2\ltimes_{\ad_{f_2}}\u_2$ are isomorphic if and only if there exists $c\neq 0$ such that
$\ad_{f_1}$ and $c\ad_{f_2}$ are conjugate. 
\end{lema}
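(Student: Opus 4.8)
The plan is to treat the conjugacy class of $\ad_{f_i}|_{\u_i}$, taken up to multiplication by a nonzero scalar, as a complete invariant of the isomorphism type. Write $A_i=\ad_{f_i}|_{\u_i}\in\operatorname{End}(\u_i)$; since $\ad_{f_i}$ annihilates $f_i$ and preserves $\u_i$, conjugacy of $\ad_{f_1}$ with $c\,\ad_{f_2}$ on all of $\g$ is equivalent to conjugacy of $A_1$ with $cA_2$, so I will work throughout with the restrictions $A_i$. There are two ambiguities to account for: replacing the transverse generator $f_i$ by $\lambda f_i+u$ ($\lambda\neq 0$, $u\in\u_i$) replaces $A_i$ by $\lambda A_i$, because $\ad_u|_{\u_i}=0$; and the abelian ideal $\u_i$ itself need not be unique. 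The first ambiguity is exactly the source of the scalar $c$.

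For the direction assuming the operators are conjugate, suppose $A_1=P(cA_2)P^{-1}$ for some $c\neq0$ and some linear isomorphism $P\colon\u_2\to\u_1$. I would define $\varphi\colon\g_2\to\g_1$ by $\varphi(f_2)=c^{-1}f_1$ and $\varphi|_{\u_2}=P$, and verify that the single nontrivial structure equation $[f_2,u]=A_2u$ is transported correctly precisely by the conjugacy identity. As $P$ is invertible and $c\neq0$, this $\varphi$ is a Lie algebra isomorphism, giving $\g_1\cong\g_2$.

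For the converse, let $\varphi\colon\g_1\to\g_2$ be an isomorphism. Then $\varphi(\u_1)$ is a codimension-one abelian ideal of $\g_2$, and from $[\varphi(f_1),\varphi(u)]=\varphi([f_1,u])$ one reads off that $B:=\ad_{\varphi(f_1)}|_{\varphi(\u_1)}$ equals $\varphi|_{\u_1}\,A_1\,(\varphi|_{\u_1})^{-1}$, so $B$ and $A_1$ are genuinely conjugate. Everything then reduces to a statement internal to $\g_2$: the operator attached to a codimension-one abelian ideal has a conjugacy class that, up to a nonzero scalar, does not depend on the chosen ideal. Granting this, $B$ and $A_2$ are conjugate up to a scalar; combining with $B\sim A_1$ yields that $A_1$ and $A_2$ are conjugate up to a scalar, which is the claim.

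The heart of the argument, and the step I expect to be the main obstacle, is this independence under the choice of ideal. Let $\u,\u'$ be two \emph{distinct} codimension-one abelian ideals of an almost abelian $\g$, so that $\u+\u'=\g$ and $\dim(\u\cap\u')=\dim\u-1$. Choosing $g\in\u'\setminus\u$ and writing $g=\mu f+u_0$ with $\mu\neq0$, the abelianness of $\u'$ gives $0=[g,w]=\mu\,A w$ for every $w\in\u\cap\u'$, whence $\u\cap\u'\subseteq\ker A$ and $\operatorname{rank}A\le1$. Since every codimension-one ideal contains $[\g,\g]=\operatorname{Im}A$, I also get $\operatorname{Im}A\subseteq\u\cap\u'\subseteq\ker A$, that is $A^2=0$. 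Thus non-uniqueness of the ideal forces $A$ to be nilpotent of rank at most one, i.e. $\g$ is abelian or $\g\cong\hei\times\r^{\dim\g-3}$; in the abelian case both operators vanish, and in the Heisenberg case both are rank-one nilpotent and hence automatically conjugate (with $c=1$). When $\operatorname{rank}A\ge2$, the same computation shows the codimension-one abelian ideal is unique, so no independence is needed. Assembling these cases completes the proof.
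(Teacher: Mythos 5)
Your proof is correct and complete. Note that the paper itself gives no proof of this lemma --- it only cites [ABDO] for the case $\dim\mathfrak{g}=4$ --- so there is no argument of the authors' to match step by step; but the crux you isolate, namely that the codimension-one abelian ideal is unique except when $\mathfrak{g}$ is abelian or isomorphic to $\mathfrak{h}_3\times\mathbb{R}^s$, is precisely the content of the Remark the authors place immediately after the lemma. They obtain it by decomposing $\mathfrak{g}=\mathbb{R}u_0\oplus\mathbb{R}v_0\oplus(\mathfrak{u}\cap\mathfrak{v})$ and examining $[u_0,v_0]$, whereas you get it from the cleaner chain $\operatorname{Im}A\subseteq\mathfrak{u}\cap\mathfrak{u}'\subseteq\ker A$, which gives $\operatorname{rank}A\le 1$ and $A^2=0$ in one stroke; the two computations are equivalent. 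The remaining ingredients --- that changing the transverse generator $f$ to $\lambda f+u$ rescales the operator because $\operatorname{ad}_u$ vanishes on the abelian ideal, that an isomorphism carries the ideal to an ideal and conjugates the attached operators, and that in the exceptional (abelian or Heisenberg) cases the two operators are both zero or both rank-one square-zero and hence conjugate --- are all present and correctly verified, so the scalar $c$ is accounted for exactly where it should be. Your preliminary observation that conjugacy of the full operators $\operatorname{ad}_{f_i}$ on $\mathfrak{g}_i$ is equivalent to conjugacy of their restrictions $A_i$ to $\mathfrak{u}_i$ (by cancelling the extra $1\times1$ zero block in the elementary-divisor data) is a minor but legitimate point that the paper glosses over entirely.
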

See \cite{ABDO} for a proof in the case $\dim\g=4$. 

\medskip

\begin{rem}
 Note that a codimension one abelian ideal of an almost abelian Lie algebra is almost always 
unique. Indeed, if $\u$ and $\vg$ are codimension one abelian ideals of $\g$, with $\u\neq 
\vg$, then $\u\cap\vg$ is a codimension two abelian ideal, and we can decompose $\g=\r u_0\oplus \r 
v_0 \oplus \u\cap \vg$ for some $u_0\in\u-\vg$, $v_0\in\vg-\u$. If $[u_0,v_0]=0$, then $\g$ is 
abelian, and if $[u_0,v_0]\neq 0$, then $[u_0,v_0]$ generates the commutator ideal of $\g$, and 
therefore $\g$ is isomorphic to $\h_3\times \r^s$ for some $s\geq 0$, where $\h_3$ denotes the 
$3$-dimensional Heisenberg Lie algebra.

As a consequence, in any other case the codimension one abelian ideal is unique.
\end{rem}

\medskip

An important feature concerning almost abelian Lie groups is that there exists a 
criterion to determine when such a Lie group admits lattices. In general, it is not easy to 
determine if a given Lie group $G$ admits a lattice. A well known restriction 
is that if this is the case then $G$ must be unimodular (\cite{Mi}), i.e. the Haar measure on $G$ 
is left and right invariant, or equivalently, when $G$ is connected, $\tr(\ad_x)=0$ for any $x$ in 
the Lie algebra $\g$ of $G$. In the case of an almost abelian Lie group we have the following fact, 
which will prove very useful in forthcoming sections:

\begin{prop}\label{latt}\cite{B}
Let $G=\r\ltimes_\phi\r^{2n+1}$ be an almost abelian Lie group. Then $G$ admits a lattice if and 
only if there exists a
$t_0\neq 0$ such that $\phi(t_0)$ can be conjugated to an integer matrix.
\end{prop}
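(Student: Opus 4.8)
The plan is to study a putative lattice through the exact sequence $1\to\r^d\to G\to\r\to 1$ with $d=2n+1$, where $N=\{0\}\times\r^d=\exp(\u)$ is the abelian normal subgroup and the group law is $(t_1,v_1)(t_2,v_2)=(t_1+t_2,\,v_1+\phi(t_1)v_2)$. The computation I would record first is that conjugation by an element $(t,v)\in G$ restricts on $N$ to $(0,w)\mapsto(0,\phi(t)w)$; notice the translation part $v$ does not intervene. This small identity is exactly what links the abstract lattice condition to the linear map $\phi(t_0)$, and it underlies both implications.

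For the forward implication, suppose $\Gamma$ is a lattice. First I would show that $\Lambda:=\Gamma\cap N$ is a full-rank lattice in $N\cong\r^d$ and that the projection $p:G\to G/N\cong\r$ sends $\Gamma$ onto a nontrivial discrete subgroup $t_0\mathbb{Z}$ with $t_0\neq 0$. This is the technical heart and the step I expect to be \textbf{the main obstacle}: for an arbitrary normal subgroup it is false that a lattice meets it in a lattice, so one must invoke the structure theory of lattices in simply connected solvable Lie groups. Concretely, when $\g$ is not nilpotent the abelian ideal $\u$ is precisely the nilradical of $\g$ (a strictly larger nilpotent ideal would have codimension zero, forcing $\g$ nilpotent), so Mostow's theorem guarantees that $\Gamma$ meets the nilradical $N$ in a lattice and projects to a lattice of $G/N=\r$; the nilpotent case is handled by Malcev's theorem. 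Granting this, I choose $\gamma=(t_0,v)\in\Gamma$ with $p(\gamma)=t_0$. Conjugation by $\gamma$ preserves both $\Gamma$ and $N$, hence preserves $\Lambda$, and by the computation above it acts on $N$ as $\phi(t_0)$. Therefore $\phi(t_0)\Lambda=\Lambda$, and if $P$ is the matrix whose columns form a $\mathbb{Z}$-basis of $\Lambda$, then $P^{-1}\phi(t_0)P\in GL(d,\mathbb{Z})$; that is, $\phi(t_0)$ is conjugate to an invertible integer matrix.

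For the converse, assume $\phi(t_0)$ is conjugate to an integer matrix for some $t_0\neq 0$. Since $\phi(t_0)$ is invertible with inverse $\phi(-t_0)$, both of which must then preserve the relevant lattice, this is equivalent to $\phi(t_0)$ preserving a full-rank lattice $\Lambda=P\mathbb{Z}^d\subset\r^d$, so $\phi(t_0)^k\Lambda=\Lambda$ for every $k\in\mathbb{Z}$. I then set
\[ \Gamma=\{(kt_0,w):k\in\mathbb{Z},\ w\in\Lambda\}\subset G. \]
Using the group law together with $\phi(kt_0)\Lambda=\Lambda$, a direct check shows that $\Gamma$ is closed under products and inverses, hence a subgroup; it is discrete because $t_0\mathbb{Z}\times\Lambda$ is discrete in $\r\times\r^d$.

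It remains to verify cocompactness: taking $F$ a compact fundamental domain for $\Lambda$ in $\r^d$, the compact set $[0,t_0]\times F$ surjects onto $\Gamma\backslash G$, so the quotient is compact and $\Gamma$ is a lattice. I would close with a one-line remark that the equivalence ``$\phi(t_0)$ preserves a full lattice'' $\iff$ ``$\phi(t_0)$ is conjugate to a matrix in $GL(d,\mathbb{Z})$'' is simply the choice of a lattice basis, and that invertibility over $\mathbb{Z}$ comes for free here since $\phi(t_0)$ and $\phi(-t_0)$ both stabilize $\Lambda$.
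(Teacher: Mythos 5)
The paper does not prove this proposition; it is quoted from Bock's paper \cite{B}, so there is no internal proof to compare against. Your strategy (structure theory of lattices for the forward direction, an explicit semidirect-product lattice for the converse) is the standard route and is essentially the right one, but two points need repair.

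First, the reduction ``$\Lambda:=\Gamma\cap N$ is a full lattice in $N$ and $p(\Gamma)=t_0\ZZ$'' is correctly justified by Mostow's theorem only when $\u$ is the nilradical, i.e.\ when $\g$ is \emph{not} nilpotent. In the nilpotent case the claim can simply be false for the given splitting: already for $G=\r^2$ written as $\r\ltimes_{\mathrm{Id}}\r$, the lattice $\ZZ(1,0)+\ZZ(\sqrt2,1)$ meets $N=\{0\}\times\r$ trivially, and for $H_3$ one can choose the codimension-one abelian ideal irrationally with respect to a given lattice. Invoking Malcev here does not produce the intersection property. The nilpotent case must instead be disposed of directly: there $\phi(t_0)$ is unipotent, hence conjugate to its (integer) Jordan form for every $t_0$, and $\g$ has rational structure constants in a Jordan basis, so Malcev always provides a lattice --- both sides of the equivalence hold unconditionally and the dichotomy is harmless. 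You should say this explicitly rather than fold it into the Mostow step.

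Second, and more seriously, your converse is circular exactly where the statement is delicate. From $E=P\phi(t_0)P^{-1}\in M_{2n+1}(\ZZ)$ you only get $\phi(t_0)\Lambda\subseteq\Lambda$ for $\Lambda=P^{-1}\ZZ^{2n+1}$; equality, which you need for $\Gamma=t_0\ZZ\ltimes\Lambda$ to be closed under inverses, requires $\det E=\pm1$, i.e.\ $E\in GL(2n+1,\ZZ)$. This is not implied by the hypothesis as literally stated: for $\phi(t)=e^tI_3$ the matrix $\phi(\log 2)=2I_3$ is integral, yet $G$ is non-unimodular and admits no lattice. Your closing remark that invertibility over $\ZZ$ ``comes for free since $\phi(t_0)$ and $\phi(-t_0)$ both stabilize $\Lambda$'' assumes the conclusion. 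The criterion must be read with ``conjugate to an element of $GL(2n+1,\ZZ)$'' (equivalently, $\phi(t_0)$ preserves a full lattice); in every application in this paper $G$ is unimodular, so $\det\phi(t_0)=e^{t_0\operatorname{tr}(\operatorname{ad}_{f_1})}=1$ and an integral conjugate automatically lies in $SL(2n+1,\ZZ)$, which is why the issue never surfaces downstream. With these two repairs your argument is complete; the conjugation computation, the identification $\phi(t_0)\Lambda=\Lambda\Leftrightarrow P^{-1}\phi(t_0)P\in GL(2n+1,\ZZ)$, and the cocompactness check via $[0,t_0]\times F$ are all correct.
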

In this situation, a lattice is given by $\Gamma=t_0\mathbb{Z}\ltimes P^{-1}\mathbb Z^{2n+1}$, 
where $P\phi(t_0)P^{-1}$ is an integer matrix.

\

\section{Lattices in almost abelian Lie groups with LCK structures}

In this section we characterize firstly all almost abelian Lie algebras which admit LCK
structures. Secondly, we determine which of the associated simply connected almost abelian Lie 
groups admit lattices, proving that this only happens in dimension $4$.

\subsection{LCK almost abelian Lie algebras}

Let $\g$ be an almost abelian Lie algebra of dimension $2n+2$, so that there exists an abelian
ideal $\u$ of dimension $2n+1$. Assume that $\g$ is equipped with a Hermitian structure $(J,\pint)$,
where $J$ is a complex structure. Consider $\u\cap J\u$, the maximal $J$-invariant subspace of $\u$.
Clearly, $\dim(\u\cap J\u)=2n$, and there exists $f_2\in\u$, $f_2\in(\u\cap J\u)^\perp$, and
$|f_2|=1$. Define $f_1=-Jf_2\in \u^\perp$.
Then we get the orthogonal decomposition $\g=\text{span}\{f_1,f_2\}\oplus (\u\cap J\u)$, where 
$\u=\r f_2\oplus (\u\cap J\u)$. 

We can write also $\g= \r f_1\ltimes \u$, where the adjoint action of $f_1$ on $\u$ is given by 
\begin{equation}\label{w}
[f_1,f_2]=\mu f_2+ v_0, 
\end{equation}
for some $v_0\in\u\cap J\u$ and $\mu\in\mathbb R$, and for $x\in \u\cap J\u$ we have 
\[ [f_1,x]=\eta(x) f_2+ Ax, \quad \eta\in(\u\cap J\u)^*, \, A\in\operatorname{End}(\u\cap J\u).\]

Since $J$ is integrable, we have that $N_J(f_1,x)=0$ for all $x \in \u\cap J\u$, that is, \[ 
J[f_1,x]=[Jf_1,x]+[f_1,Jx]+J[Jf_1,Jx],\] which implies
\[ -\eta(x)f_1+JAx=\eta(Jx)f_2+AJx, \]
hence we obtain $\eta=0$ and $JA=AJ$. Therefore, $\u\cap J\u$ is a $J$-invariant abelian ideal of 
codimension $2$ in $\g$. Denoting $\mathfrak a: = \u\cap J\u$, we obtain the following result 
(cf. \cite{LR}).

\

\begin{lema}\label{hermitian}
Let $\g$ be an almost abelian Lie algebra and $(J, \pint)$ a Hermitian structure on $\g$. Then
there exist a $J$-invariant abelian ideal $\mathfrak a$ of codimension $2$, an orthonormal basis
$\{f_1,f_2\}$ of ${\mathfrak a}^{\perp}$, $v_0\in\mathfrak a$ and $\mu\in\mathbb R$ such that
$[f_1,f_2]=\mu f_2 + v_0$, $\ad_{f_1}|_{\mathfrak a}$ commutes with $J|_{\mathfrak a}$ and 
$\ad_{f_2}|_{\mathfrak a}=0$.
\end{lema}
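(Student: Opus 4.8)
The plan is to exploit the $J$-invariant refinement of the given codimension-one abelian ideal and to read off the claimed relations from the integrability of $J$ together with the fact that $\u$ is abelian. First I would fix the abelian ideal $\u$ of codimension one and set $\mathfrak a := \u \cap J\u$, the maximal $J$-invariant subspace contained in $\u$. Since $\u$ has codimension one and $J$ is an automorphism, a dimension count gives $\dim \mathfrak a = 2n$, so $\mathfrak a$ has codimension two in $\g$. To produce the orthonormal basis of $\mathfrak a^\perp$, I would choose a unit vector $f_2 \in \u \cap (\u\cap J\u)^\perp$ and set $f_1 = -Jf_2$. Because $J$ is an isometry, $|f_1| = 1$, and the Hermitian condition gives $\langle f_1, f_2\rangle = -\langle Jf_2, f_2\rangle = 0$, so $\{f_1, f_2\}$ is orthonormal; being orthogonal to $\mathfrak a$ and having $\dim \mathfrak a^\perp = 2$, it is a basis of $\mathfrak a^\perp$. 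I would also record $Jf_1 = f_2$.

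The relation $\ad_{f_2}|_{\mathfrak a} = 0$ is immediate: $f_2 \in \u$ and $\u$ is abelian, so $[f_2, x] = 0$ for every $x \in \mathfrak a \subset \u$. It remains to analyze $\ad_{f_1}$. Writing $\g = \r f_1 \ltimes \u$, I would set $[f_1, f_2] = \mu f_2 + v_0$ with $\mu \in \r$ and $v_0 \in \mathfrak a$ (this defines $\mu$ and $v_0$), and for $x \in \mathfrak a$ write $[f_1, x] = \eta(x) f_2 + Ax$ with $\eta \in \mathfrak a^*$ and $A \in \operatorname{End}(\mathfrak a)$.

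The crux of the argument—and the only genuine computation—is the integrability condition. I would impose $N_J(f_1, x) = 0$ for $x \in \mathfrak a$. Since $Jf_1 = f_2 \in \u$ and $\u$ is abelian, the brackets $[Jf_1, x]$ and $[Jf_1, Jx]$ vanish, so the Nijenhuis identity collapses to $J[f_1, x] = [f_1, Jx]$. Expanding both sides using the formulas for $\ad_{f_1}$ and $Jf_2 = -f_1$ yields
\[
-\eta(x) f_1 + JAx = \eta(Jx) f_2 + AJx .
\]
Comparing the $f_1$-components—the right-hand side has none, since $f_2 \in \mathfrak a^\perp$ is distinct from $f_1$ and $AJx \in \mathfrak a$—forces $\eta = 0$, and the remaining components give $JA = AJ$, i.e. $\ad_{f_1}|_{\mathfrak a} = A$ commutes with $J|_{\mathfrak a}$. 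Consequently $\mathfrak a$ is preserved by $\ad_{f_1}$ and by $J$, and being contained in the abelian ideal $\u$ it is itself a $J$-invariant abelian ideal of codimension two. Collecting $f_1, f_2, \mu, v_0$ together with $[f_1,f_2] = \mu f_2 + v_0$, $[A,J]=0$, and $\ad_{f_2}|_{\mathfrak a}=0$ completes the proof. I expect no serious obstacle here: the preparatory orthogonal decomposition does the structural work, and the integrability identity reduces to a short linear computation once the abelianness of $\u$ is used to annihilate the $f_2$-brackets.
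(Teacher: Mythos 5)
Your proposal is correct and follows essentially the same route as the paper: take $\mathfrak a=\u\cap J\u$, choose a unit $f_2\in\u\cap\mathfrak a^\perp$ and $f_1=-Jf_2$, use abelianness of $\u$ for $\ad_{f_2}|_{\mathfrak a}=0$, and extract $\eta=0$ and $JA=AJ$ from $N_J(f_1,x)=0$. The only difference is cosmetic — you make explicit why the Nijenhuis identity collapses to $J[f_1,x]=[f_1,Jx]$, which the paper leaves implicit.
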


\


We will assume from now on that $(J, \pint)$ is LCK. Therefore there exists a closed $1$-form 
$\theta\neq 0$ such that $d\omega=\theta\wedge \omega$. We will consider two
cases, according to the dimension of $\g$.

\medskip

\subsubsection{Dimension of $\g\geq 6$}
In this case, that is, $n\geq 2$, we have that $\dim(\u\cap J\u)\geq 4$.

\

Let us prove that $\theta$ is a multiple of the closed $1$-form $f^1$, where $f^1$ is the metric 
dual of $f_1$. Indeed, for each $x\in \u\cap J\u$ we can find $0\neq y\in \u\cap J\u$ such that 
$\la x,y\ra=0=\la x,Jy \ra$. Since $\u$ is abelian, we have that $d\omega(x,y,Jy)=0$, while on the 
other hand we compute $\theta\wedge\omega(x,y,Jy)=\theta(x)\omega(y,Jy)=\theta(x)|y|^2$, and as a 
consequence we obtain $\theta(x)=0$ for any $x\in \u\cap J\u$. 

Now, from $d\omega(f_2,x,Jx)=\theta\wedge \omega(f_2,x,Jx)$ for $x\in\u\cap J\u, x\neq0$, we obtain
that $\theta(f_2)=0$. Consequently, $\theta=a f^1$ for some $a\neq 0$.

\

From $d\omega(f_1,f_2,x)=\theta\wedge \omega(f_1,f_2,x)$ and \eqref{w} we obtain that
$\theta(x)=-\la Jv_0,x\ra$ for any $x\in \u\cap J\u$. This implies that $Jv_0=0$ and therefore 
\[v_0=0, \quad \text{so that} \quad [f_1,f_2]= \mu f_2.\] 
This implies that $\g={\mathfrak a}^\perp\ltimes \mathfrak a$, where we are using the notation of
Lemma \ref{hermitian}.

Let us compute $d\omega(f_1,x, Jy)=\theta\wedge \omega(f_1,x, Jy)$ for $x,y\in \u\cap J\u$. We 
obtain that 
\[ \langle Ax,y\rangle +\langle x,Ay\rangle = -a\langle x,y\rangle. \]
Decomposing $A$ as $A=U+B$, where $U$ is self-adjoint and $B$ is skew-adjoint, it follows from 
the equation above that $U=-\frac{a}{2}\operatorname{Id}$, and therefore, setting 
$\lambda=-\frac{a}{2}$, we have that
\[ A=\lambda \operatorname{Id}+B, \quad B^*=-B, \quad BJ=JB. \]

Choosing an orthonormal basis $\{u_1,\dots,u_n,v_1,\dots,v_n\}$ of $\u\cap J\u$ such that 
$Ju_i=v_i$, $i=1,\ldots,n$, we can identify $\u\cap J\u$ with $\r^{2n}$, $\u$ with 
$\r^{2n+1}$, $\g$ with $\r\ltimes\r^{2n+1}$, and we have the following matrix representations: 
\begin{equation}\label{ad-f1} 
J|_{\r^{2n}}=\begin{pmatrix} 0 & -I \cr I & 0\end{pmatrix}, \qquad 
\ad_{f_1}|_{\r^{2n+1}}=
\left(
\begin{array}{c|ccc}       
\mu \\
\hline
  & & & \\
  &  & \lambda I + B & \\
  & & & 
\end{array}
\right), \qquad B\in \mathfrak{u}(n).
\end{equation}
Moreover, the fundamental $2$-form $\omega$ and the Lee form $\theta$ are given by: 
\[\w=f^1\wedge f^2 + \sum_{i=1}^n u^i\wedge v^i, \qquad \theta = -2\lambda f^1, \] 
where $\{f^1, f^2, u^1,\dots,u^n,v^1,\dots,v^n\}$ is the dual basis. Note that the operator 
$\ad_{f_1}|_{\r^{2n+1}}$ is nilpotent if and only if it is zero, and that 
if $\g$ is unimodular then $\lambda=-\frac{1}{2n} \mu$.

\medskip

\begin{rems}
(i) When $\lambda=0$, it follows that $\theta=0$ and therefore the Hermitian structure $(J, \pint)$ 
is K\"ahler. Since we are interested in non-K\"ahler LCK structures, we will assume $\lambda\neq 
0$, and in this case, it is easy to see that the operator $\lambda\operatorname{Id}+B$ with 
$B\in\mathfrak{u}(n)$ is non-singular.

\smallbreak\n(ii) Almost K\"ahler structures on almost abelian Lie algebras were 
studied in \cite{LW}.
\end{rems}

\

\subsubsection{Dimension of $\g=4$.}

In this case we have $\g=\text{span}\{f_1,f_2\}\oplus \text{span}\{u,v\}$, where $Jf_1=f_2$,
$Ju=v$ and $\{f_1,f_2,u,v\}$ is an
orthonormal basis of $\g$. The brackets on $\g$ are given by
\[ [f_1,f_2]=\mu f_2 + mu +nv, \; \text{for some }  \mu,m,n\in\mathbb{R},\]
\[ [f_1,z]=Az, \; \text{where} \; z\in\text{span}\{u,v\} \;\text{and}\; A=\begin{pmatrix} x & -y \cr
y & x\end{pmatrix}, \,
\text{with } x,y\in\mathbb{R}.\]
Then we get that
\[ \ad_{f_1}|_{\r^3}=
\left(
\begin{array}{c|cc}       
\mu \\
\hline
  m& x& -y\\
  n& y&  x
\end{array}
\right).
\]

\smallskip

From $d\omega(f_1,f_2,u)=\theta\wedge \omega(f_1,f_2,u)$, we obtain that $\theta(u)=n$; in the same 
way, $\theta(v)=-m$. On the other hand, from $d\omega(f_2,z,Jz)=\theta\wedge \omega(f_2,z,Jz)$ for 
$z\in\text{span}\{u,v\}$, we get that
$\theta(f_2)=0$. Therefore we can write $\theta$ as
\begin{equation}\label{tita4}
 \theta= a f^1+nu^*-mv^*,
\end{equation}
for some $a\in\mathbb{R},$ where $\{f^1,f^2,u^*,v^*\}$ is the dual basis of $\{f_1,f_2,u,v\}$. 
Recalling that $\omega=f^1\wedge f^2+u^*\wedge v^*$, it follows from $d\omega=\theta\wedge\omega$ 
that $a=-2x$. 

Next, since $d\theta=0$, we obtain
\[0=d\theta=-2x\, df^1+n\, du^*-m\, dv^*=(-nx+my)f^1\wedge u^* + (ny+mx)f^1\wedge v^*,\]
and we consider two cases according whether to $m^2+n^2\neq 0$ or $m^2+n^2=0$.

In the first case we have $x=y=0$, thus the only non-vanishing bracket is
$[f_1,f_2]=\mu f_2+mu+nv$. If $\mu=0$, this Lie algebra is isomorphic to 
$\mathfrak{h}_3\times\r$, where $\mathfrak{h}_3$ is the $3$-dimensional Heisenberg Lie 
algebra. It is well known that $\mathfrak{h}_3\times\r$ admits LCK structures (\cite{CFL}, 
see also \cite{AO,S}). If $\mu\neq0$, this Lie algebra is isomorphic to $\aff(\r)\times\r^2$, 
where $\aff(\r)$ denotes the non-abelian $2$-dimensional Lie algebra of the group of affine motions 
of the real line. We point out that this decomposition is neither orthogonal nor $J$-invariant. Note 
that these Lie algebras have $1$-dimensional commutator ideal, $\mathfrak{h}_3\times \r$ is 
nilpotent and $\aff(\r)\times\r^2$ is not unimodular.

The other case is $m=n=0$, so that
\begin{equation}\label{ad-dim4} 
\ad_{f_1}|_{\r^3}=\left(\begin{array}{c|cc}
\mu \\
\hline
  & x & -y\\
  & y & x
\end{array}\right), 
\end{equation}
and we obtain from \eqref{tita4} that $\theta= -2x f^1$. Note that $\ad_{f_1}|_{\r^{2}}= x I + B$, 
for some $x\in\r$ and $B\in \mathfrak{u}(1)$.

\

Conversely, it is easy to verify that if $\g$ is an almost abelian Lie algebra where the adjoint 
action of $\r$ on the codimension one abelian ideal is given by either \eqref{ad-f1} or 
\eqref{ad-dim4}, then $\g$ admits an LCK structure. 

\medskip

To close this section, we state the following result which summarizes the results obtained so far:

\begin{teo}\label{lcK}
Let $\g$ be a $(2n+2)$-dimensional almost abelian Lie algebra and $(J,\pint)$ a Hermitian
structure on $\g$, and let $\g'$ denote the commutator ideal $[\g,\g]$ of $\g$.
\begin{enumerate}
 \item [\ri] If $\dim \g'=1$, then $(J,\pint)$ is LCK if and only if $\g$ is isomorphic to 
$\mathfrak{h}_3 \times\r$ or $\aff(\r)\times \r^2$ as above.
 \item [\rii] If $\dim\g'\geq 2$, then $(J,\pint)$ is LCK if and only if $\g$ can 
be decomposed as $\g={\mathfrak a}^\perp\ltimes \mathfrak a$, a $J$-invariant orthogonal sum with a
codimension $2$ abelian ideal $\mathfrak a$, and there exists an orthonormal basis $\{f_1,f_2\}$ of 
${\mathfrak a}^\perp$ such that 
\[ [f_1,f_2]=\mu f_2, \quad f_2=Jf_1, \quad \ad_{f_2}|_{\mathfrak a}=0 \text{ and } 
\ad_{f_1}|_{\mathfrak a}= \lambda I + B, \]
for some $\mu,\lambda\in\r$, $\lambda\neq 0$, and $B\in \mathfrak{u}(n)$. The corresponding Lee 
form is given by $\theta=-2\lambda f^1$. Furthermore, the Lie algebra $\g$ is unimodular if and only 
if $\lambda=-\frac{\mu}{2n}$.
\end{enumerate}
\end{teo}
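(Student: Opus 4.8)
The plan is to assemble the statement from the case analysis already completed in Subsections 3.1.1 ($\dim\g\geq 6$) and 3.1.2 ($\dim\g=4$); the only new ingredient is to re-sort those conclusions by the invariant $\dim\g'$ rather than by $\dim\g$. Throughout I take Lemma \ref{hermitian} as the starting point and keep the standing non-K\"ahler hypothesis $\theta\neq 0$ (equivalently $\lambda\neq 0$) fixed in the Remarks after \eqref{ad-f1}, so that part (ii) is genuinely about proper LCK structures.

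For the forward implication I would first record how $\dim\g'$ behaves. Since $\u$ is abelian and $\ad_{f_1}(f_1)=0$, one has $\g'=\ad_{f_1}(\u)$, spanned by $[f_1,f_2]$ together with $\ad_{f_1}(\mathfrak a)$. When $\dim\g\geq 6$, the computation leading to \eqref{ad-f1} gives $\ad_{f_1}|_{\mathfrak a}=\lambda I+B$ with $B\in\mathfrak{u}(n)$; as $\lambda\neq 0$ this operator is nonsingular (Remarks), so $\mathfrak a\subseteq\g'$ and $\dim\g'\geq 2n\geq 4$. Hence every LCK structure in dimension $\geq 6$ lies in the range $\dim\g'\geq 2$ and already carries exactly the data claimed in (ii). For $\dim\g=4$ I would split as in 3.1.2: in the sub-case $m^2+n^2\neq 0$ one is forced to $x=y=0$, so $\ad_{f_1}|_{\mathfrak a}=0$ and $\g'=\mathrm{span}\{\mu f_2+mu+nv\}$ is one-dimensional, whereas in the sub-case $m=n=0$ one obtains \eqref{ad-dim4} with $\ad_{f_1}|_{\r^2}=\lambda I+B$, $B\in\mathfrak{u}(1)$, $\lambda=x\neq 0$, whence again $\mathfrak a\subseteq\g'$ and $\dim\g'\geq 2$. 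Thus $\dim\g'=1$ occurs only in the first dimension-$4$ sub-case, and $\dim\g'\geq 2$ collects all the others, which is precisely the dichotomy in the statement.

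To finish (i) I would identify the one-dimensional commutator case. Writing $w=\mu f_2+mu+nv$ for the generator of $\g'$ and using $\ad_{f_1}|_{\mathfrak a}=0$, one finds $[f_1,w]=\mu w$: if $\mu=0$ then $w$ is central, $\{f_1,f_2,w\}$ spans a copy of $\h_3$ and the remaining direction is central, so $\g\cong\h_3\times\r$; if $\mu\neq 0$ then $\{f_1,w\}$ spans $\aff(\r)$ and its complement is central, so $\g\cong\aff(\r)\times\r^2$. The converse in (i) is the observation that both Lie algebras carry the Hermitian structure built above and are LCK (for $\h_3\times\r$ classically, cf. \cite{CFL,AO,S}). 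For (ii) the converse is the verification announced just before the theorem: with $\omega=f^1\wedge f^2+\sum_{i=1}^n u^i\wedge v^i$ and $\theta=-2\lambda f^1$ one checks directly from \eqref{ad-f1} that $d\theta=0$ and $d\omega=\theta\wedge\omega$, so any $\g$ of the stated form is LCK.

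For the unimodularity clause I would compute traces. For $x\in\u$ the map $\ad_x$ sends $f_1$ into $\u$ and kills $\u$, so $\tr(\ad_x)=0$ automatically, while $\ad_{f_1}(f_1)=0$; hence $\g$ is unimodular iff $\tr(\ad_{f_1}|_{\u})=0$. Since $\tr(\ad_{f_1}|_{\u})=\mu+\tr(\lambda I+B)=\mu+2n\lambda$ (as $\tr B=0$ for $B\in\mathfrak{u}(n)$), this holds exactly when $\lambda=-\frac{\mu}{2n}$. The only real obstacle in the argument is bookkeeping: checking that $\dim\g'$ separates the cases cleanly and that the dimension-$4$ first sub-case is correctly recognized as $\h_3\times\r$ or $\aff(\r)\times\r^2$; everything else is quoted from the preceding subsections or is a routine computation.
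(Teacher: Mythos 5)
Your proposal is correct and follows essentially the same route as the paper: the theorem there is explicitly a summary of the case analysis in Subsections 3.1.1 and 3.1.2, and your argument assembles exactly those computations, re-indexed by $\dim\g'$. The extra details you supply (that $\lambda\neq 0$ forces $\mathfrak a\subseteq\g'$ so the dichotomy by $\dim\g'$ matches the dichotomy by sub-case, the explicit identification of the $\dim\g'=1$ algebras via $[f_1,w]=\mu w$, and the trace computation $\tr(\ad_{f_1})=\mu+2n\lambda$) are precisely the routine verifications the paper leaves implicit.
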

 
\medskip

\begin{rems}
(i) If we allow $\lambda=0$ in Theorem \ref{lcK}(ii), the Hermitian structures thus obtained are 
K\"ahler.

\smallbreak\n(ii) A Hermitian manifold is called \textit{Vaisman} if it is LCK with parallel Lee
form $\theta$. The left invariant LCK structures obtained on the Lie groups corresponding to the
Lie algebras in Theorem \ref{lcK}(ii) or $\aff(\r)\times \r^2$ from Theorem \ref{lcK}(i) are never 
Vaisman. This can be seen either by a direct computation or by the fact that the endomorphisms 
$\ad_{f_1}$ are not skew-symmetric (see \cite{AO}). On the other hand, any LCK structure on 
$\mathfrak{h}_3\times \r$ from Theorem \ref{lcK}(i) is Vaisman (see \cite{AO,S}).

\smallbreak\n(iii) With the notation used in \cite{ABDO}, the $4$-dimensional Lie algebras 
admitting LCK structures obtained from Theorem \ref{lcK}(ii) correspond to $\mathfrak r_{3,1}\times 
\r$, $\mathfrak r'_{3,\lambda}\times \r$, $\mathfrak r_{4,\mu,\mu}$ and $\mathfrak 
r'_{4,\mu,\lambda}$ for some $\mu\neq 0$ and $\lambda\neq 0$. The Lie algebra $\aff(\r)\times \r^2$ 
from Theorem \ref{lcK}(i) is denoted by $\mathfrak{r}_{3,0}\times\r$ in \cite{ABDO}.
\end{rems}

\

\subsection{Lattices in the associated LCK Lie groups}

In this section, we will consider solvmanifolds associated to the almost abelian Lie algebras 
obtained in the previous section. Therefore we will study the existence of lattices in the simply 
connected Lie groups associated to these Lie algebras. 

Recall that for an almost abelian solvable Lie group $G=\r\ltimes_\phi\r^{2n+1}$ ($n\geq 1$), its 
Lie algebra is $\g=\r\ltimes_{\ad_{f_1}}\r^{2n+1}$ where $\r$ is generated by $f_1$, and the action 
is given by $\phi(t)=e^{t\ad_{f_1}}$.

\

Let $\g$ be a unimodular almost abelian Lie algebra equipped with an LCK structure. If $\g$ is 
nilpotent, then it follows from Theorem \ref{lcK} that $\g$ is isomorphic to $\mathfrak{h}_3\times 
\r$. The lattices in the associated simply connected nilpotent Lie group $H_3\times \r$ are 
well known (see \cite{GW} for their classification). The corresponding compact nilmanifolds are 
primary Kodaira surfaces \cite{H}.

From now on, we will consider non-nilpotent unimodular almost abelian Lie algebras with LCK 
structures. According to Theorem \ref{lcK}, such a Lie algebra $\g$ can be decomposed as $\g=\r 
f_1\ltimes \r^{2n+1}$, an orthogonal sum, where $\r^{2n+1}=\r f_2\oplus \r^{2n}$ and 
\[ \ad_{f_1}|_{\r^{2n+1}}=
\left(
\begin{array}{c|ccc}       
\mu \\
\hline
 & & &\\
 & & -\frac{\mu}{2n} I + B & \\
 & & &
\end{array}
\right),
\]
for some $0\neq \mu\in\r$ and $B\in \mathfrak{u}(n)$. Note that since $B$ is skew-symmetric, if
$c\in\mathbb C$ is an eigenvalue of $A=-\frac{\mu}{2n}I +B$, then $c=-\frac{\mu}{2n}\pm
i\eta$ for some $\eta\in\mathbb{R}$. 

If $G$ denotes the simply connected almost abelian Lie group with Lie algebra $\g$, then
$G=\r\ltimes_\phi\r^{2n+1}$ with 
\begin{equation}\label{fi}
 \phi(t)=e^{t\ad_{f_1}|_{\r^{2n+1}}}=
\left(
\begin{array}{c|c}       
e^{t\mu} &\\
\hline
\\
& e^{-\frac{t\mu}{2n}}e^{tB}
\\
\\
\end{array}
\right).
\end{equation}
This matrix has a real eigenvalue $e^{t\mu}$ and the others are $e^{-\frac{t\mu}{2n}\pm i\eta}$ for
some $\eta\in\mathbb{R}$.

\medskip

The existence of lattices on $G$ will depend on the dimension of $G$, since we will show that such a 
lattice exists only if $\dim G=4$.

\

\subsubsection{Lattices in dimension $\geq 6$}

We will show that these Lie groups cannot admit lattices for $n\geq 2$. We state first a result about 
the roots of a certain class of polynomials with integer coefficients. 
  
\begin{lema}\label{raices}
Let $p$ be a polynomial of the form \[p(x)=x^{2n+1}-m_{2n}x^{2n}+m_{2n-1}x^{2n-1}+\cdots+m_1x-1\]
with $m_j\in\ZZ$ and $n\geq 2$, and let $x_0,\dots,x_{2n}\in \c$ denote the roots of $p$. If 
$x_0\in\mathbb{R}$ is a simple root and $|x_1|=\dots=|x_{2n}|$, then $x_0=1$ and $|x_j|=1$, 
$j=1,\ldots, 2n$.
\end{lema}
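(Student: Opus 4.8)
The plan is to extract arithmetic constraints from the two defining features of $p$: its constant term is $-1$ and all its coefficients are integers. First I would read off from the constant term that the product of all roots equals $(-1)^{2n+1}\cdot(-1)=1$. Writing $r=|x_1|=\cdots=|x_{2n}|$ and taking absolute values in $x_0\prod_{j\ge 1}x_j=1$ gives $|x_0|\,r^{2n}=1$, and since $x_0$ is real this forces $x_0=\pm r^{-2n}$. Thus the whole statement reduces to proving $r=1$; once this is known, $x_0=\pm1$, and a short sign argument (carried out at the end) will pin down $x_0=1$.

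To prove $r=1$ I would first note that $x_0$ is an algebraic \emph{unit}. Indeed, the reciprocal polynomial $\hat p(x)=-x^{2n+1}p(1/x)$ is again monic with integer coefficients and constant term $-1$, and its roots are the $1/x_i$; hence both $x_0$ and $1/x_0$ are algebraic integers. Let $f\in\ZZ[x]$ be the minimal polynomial of $x_0$, of degree $d$, so $f\mid p$ by Gauss's lemma. Assume $r\neq1$. Then, since $r^{-2n}\neq r$, the root $x_0$ is the \emph{only} root of $p$ whose modulus differs from $r$; as $f$ is separable and $x_0$ is a simple root of $p$, the remaining $d-1$ roots of $f$ all have modulus $r$, so $|f(0)|=|x_0|\,r^{\,d-1}=r^{\,d-1-2n}$. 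Because $x_0$ is a unit, $|f(0)|=1$, whence $r^{\,d-1-2n}=1$; as $r\neq1$ this gives $d=2n+1$, i.e. $p$ is irreducible. So everything comes down to excluding an irreducible $p$ of this shape with $r\neq1$.

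This irreducible case is the heart of the matter and the step I expect to be the main obstacle, since the usual tool (Kronecker's theorem, forcing roots of unity once all conjugates lie in the closed unit disc) does not apply directly: the Galois group acts transitively on the roots, so any algebraic integer built from the $x_j$ has a conjugate that ``sees'' the exceptional root $x_0$ and can have modulus $>1$. The plan here is to use the equal-modulus hypothesis structurally. After replacing $p$ by $\hat p$ if necessary I may assume $r>1$. Writing $h(x)=p(x)/(x-x_0)=\prod_{j\ge1}(x-x_j)$, the set $\{x_j\}$ is stable under $z\mapsto r^2/z$ (this is complex conjugation on the circle $|z|=r$), so $h$ is self-inversive with respect to that circle: its coefficients satisfy $c_{2n-k}=x_0r^{2k}c_k$. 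Feeding this back into the integrality relations $c_{k-1}-x_0c_k\in\ZZ$ coming from $p=(x-x_0)h\in\ZZ[x]$ yields a rigid finite system, and the goal is to show it forces $r^2$ to be a rational integer, necessarily the unit $r^2=1$ (equivalently, to produce a cyclotomic factor of $p$, contradicting irreducibility). That the circle condition is genuinely needed can be seen from a naive attempt: integrality alone is satisfied by honest integer polynomials such as $x^5-11x^4+33x^3-33x^2+11x-1=(x-1)(x^2-7x+1)(x^2-3x+1)$, whose quadratic factors have constant term $1$ yet \emph{real} roots of unequal modulus. It is precisely the requirement that the quadratic factors $x^2-\rho_k x+r^2$ be irreducible over $\mathbb{R}$ (discriminant $\rho_k^2<4r^2$), so that the $x_j$ truly lie on the circle, that is expected to collapse the system to $r=1$.

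Finally, once $r=1$ all the $x_j$ lie on the unit circle and $x_0=\pm1$. To see that $x_0=1$, observe that $\prod_{j\ge1}x_j=1/x_0=\pm1$, that the non-real $x_j$ occur in conjugate pairs each contributing $|x_j|^2=1$, and that the real ones equal $\pm1$; hence $\prod_{j\ge1}x_j=(-1)^{s}$ with $s=\#\{j\ge1:\,x_j=-1\}$. If $x_0=-1$, then $(-1)^s=-1$, so $s\ge1$ and some $x_j=-1=x_0$, contradicting that $x_0$ is a \emph{simple} root of $p$. Therefore $x_0=1$ and $|x_j|=1$ for all $j=1,\dots,2n$, as claimed.
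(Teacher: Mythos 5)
Your reduction steps are correct and partly match the paper: the product of the roots gives $|x_0|=r^{-2n}$, the reciprocal polynomial shows $x_0$ is a unit, and your irreducibility argument (via $|f(0)|=r^{d-1-2n}=1$) is a valid variant of the paper's (which instead notes that a proper factor not containing $x_0$ would have all roots of modulus $<1$ and hence constant term of absolute value $<1$). Your closing sign argument for $x_0=1$ is also correct, and in fact slightly cleaner than the paper's. But the heart of the lemma --- excluding $r\neq 1$ when $p$ is irreducible --- is not proved. You set up the self-inversive coefficient relations $c_{2n-k}=x_0r^{2k}c_k$ for $h=p/(x-x_0)$ and the integrality conditions $c_{k-1}-x_0c_k\in\ZZ$, and then state that ``the goal is to show'' this system forces $r=1$ and that the circle condition ``is expected to collapse the system.'' That is a research plan, not an argument: no contradiction is actually derived from the system, and it is not at all evident that it closes up, since the relations involve the $4n$ unknowns $c_k$ together with $x_0$ and $r$ and you extract no concrete Diophantine consequence from them. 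Your own example $(x-1)(x^2-7x+1)(x^2-3x+1)$ shows the constraints are delicate, which makes it all the more necessary to actually carry the step out.

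For comparison, the paper resolves exactly this point by a different and quite specific device. From $m_{2n}=\sum_j x_j$ and $m_1=\sum_j x_j^{-1}$, together with $\sum_{j\ge1}x_j^{-1}=\rho^2\sum_{j\ge1}x_j$ (all roots $x_j$, $j\ge1$, lying on one circle), one gets the single relation $\rho^{4n+2}-m_{2n}x_0\rho^2+m_1x_0-1=0$. In the case $x_0=\rho^{2n}$ this says $\rho^2$ is a root of $q(x)=x^{2n+1}-m_{2n}x^{n+1}+m_1x^n-1\in\ZZ[x]$; forming $\tilde q(x)=\prod_j(x-y_j^n)$ from the roots $y_j$ of $q$ and invoking irreducibility of $p$ forces $p=\tilde q$, so $q$ itself has one simple real root $\rho^2$ and all other roots of equal modulus. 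Re-running the same trace identity on $q$ --- whose coefficients of $x^{2n}$ and $x$ vanish because $n\ge2$ --- yields $\rho^{(4n+2)/n}=1$, the desired contradiction; the case $x_0=-\rho^{2n}$ is killed by comparing constant terms. You would need either to supply an argument of this kind or to actually close your coefficient system; as written, the proposal has a genuine gap at the decisive step.
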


\begin{proof}
Let $\rho\in\mathbb{R}$, $\rho>0$, such that $|x_j|=\rho^{-1}$ for $j=1,\dots,2n$. It follows from 
$\prod_{j=0}^{2n}{x_j}=1$ that $|x_0|=\rho^{2n}$.

Note that we may assume $\rho\geq 1$, since otherwise we consider the reciprocal polynomial 
$p^*(x):=-x^{2n+1}p(x^{-1})$. 

Let us suppose that $\rho >1$.

We will prove first that $p$ is irreducible over $\ZZ[x]$. Indeed, if $p=qr$ with $q,r\in\ZZ[x]$, 
then $x_0$ will be a (simple) real root of one of these polynomials, say $q$, and therefore all the 
roots of $r$ have modulus $\rho^{-1}<1$. The coefficient $r(0)$ will be the product of these roots, 
hence $|r(0)|<1$, and this is a contradiction since $r(0)\in\ZZ-\{0\}$. 

\medskip

Now, let us write $p(x)=\prod_{j=0}^{2n}(x-x_j)$. Expanding this product we obtain:
\[ m_{2n}=x_0+\sum_{j=1}^{2n} x_j, \quad m_1=\frac{1}{x_0}+\sum_{j=1}^{2n}\frac{1}{x_j}. \]
Since $\sum_{j=1}^{2n}\frac{1}{x_j}=\rho^2\sum_{j=1}^{2n} x_j$, we obtain that
\[ m_{2n}-x_0=\frac{1}{\rho^2}\left(m_1-\frac{1}{x_0}\right), \]
which implies, recalling that $|x_0|=\rho^{2n}$,  
\begin{equation}\label{poli-x0}
 \rho^{4n+2}-m_{2n}x_0\rho^{2}+m_1x_0-1=0. 
\end{equation}
 
We consider two different cases, according to: (i) $x_0=\rho^{2n}$, or (ii) $x_0=-\rho^{2n}$.

\medskip

(i) If $x_0=\rho^{2n}$ then \eqref{poli-x0} becomes 
\[ \rho^{4n+2}-m_{2n}\rho^{2n+2}+m_1\rho^{2n}-1=0. \] 
Thus, $y_0:=\rho^2$ is a root of $q(x)=x^{2n+1}-m_{2n}x^{n+1}+m_1x^n-1$.
Let $y_1,\ldots,y_{2n}$ denote the other roots of $q$ and let us consider the polynomial
$\tilde{q}(x)=\prod_{j=0}^{2n}(x-y_j^n)$. Since $q(x)=\prod_{j=0}^{2n}(x-y_j)\in\ZZ[x]$, it can be 
seen that $\tilde{q}\in\ZZ[x]$ as well.

Note that $\tilde{q}(\rho^{2n})=0$. Since $p,\tilde{q}\in\ZZ[x]$ are monic polynomials of the same
degree, both vanish in $x_0=\rho^{2n}$ and $p$ is irreducible, then $p=\tilde{q}$. 

Therefore the set $\{y_1^n,\ldots,y_{2n}^n\}$ is a permutation of $\{x_1,\ldots,x_{2n}\}$, so that
the polynomial $q$ has a simple real root $y_0=\rho^{2}>0$ and the other roots satisfy 
$|y_1|=\dots=|y_{2n}|=\rho^{-\frac1n}$. We can perform the same computations as above with the 
polynomial $q$, taking into account that in this case $\sum_{j=0}^{2n} y_j=0$ and $\sum_{j=0}^{2n} 
\frac{1}{y_j}=0$, since the coefficients of $x^{2n}$ and $x^1$ in $q$ are $0$ (here we are using 
that $n\geq 2$). 

It follows that $\rho$ satisfies the equation 
\[  (\rho^\frac{1}{n})^{4n+2}-1=0, \]
so that $\rho=1$, which contradicts the assumption $\rho>1$. 

\medskip

(ii) If $x_0=-\rho^{2n}$, then \eqref{poli-x0} becomes
\[ \rho^{4n+2}+m_{2n}\rho^{2n+2}-m_1\rho^{2n}-1=0. \] 
Thus, $y_0:=\rho^2$ is a root of $q(x)=x^{2n+1}+m_{2n}x^{n+1}-m_1x^n-1$. Let $y_1,\ldots,y_{2n}$ 
denote the other roots of $q$ and let us consider the polynomial 
$\tilde{q}(x)=\prod_{j=0}^{2n}(x-y_j^n)$. Since $q(x)=\prod_{j=0}^{2n}(x-y_j)\in\ZZ[x]$, then
$\tilde{q}\in\ZZ[x]$ as well. Let us consider now the polynomial $q_1\in \ZZ[x]$ defined by 
$q_1(x)=-\tilde{q}(-x)$. Note that $q_1$ is a monic polynomial which vanishes in $x_0=-\rho^{2n}$. 
It follows from the irreducibility of $p$ that $p=q_1$, but $p(0)=-1$ whereas $q_1(0)=1$, a 
contradiction.

\medskip

We conclude that the assumption $\rho>1$ leads to a contradiction, and as a consequence, we 
have $\rho= 1$. Thus, $|x_j|=1$ for all $j=0,\ldots, 2n$ and $x_0=1$ or $x_0=-1$. 

If $x_0=-1$, then there exists $l\in\{0,1,\ldots,2n\}$ such that $x_1=\cdots=x_{2l}=1$ and the 
remaining 
roots are non-real complex numbers $\alpha_1,\ldots,\alpha_{n-l}$ and their complex conjugates, 
with $|\alpha_j|=1$ for all $j$. However,
\[ 1=\prod_{j=0}^{2n}{x_j}= 
(-1)\left(\prod_{j=1}^{2l}{x_j}\right)\left(\prod_{j=1}^{n-l}|\alpha_j|^2\right)=-1,\]
a contradiction.

Therefore $x_0=1$ and $|x_j|=1$ for all $j$, and the theorem is proved.
\end{proof}

\smallskip

\begin{rem}
 It follows from Kronecker's theorem \cite{Kr} that all the roots of the polynomial $p$ in Lemma 
\ref{raices} are roots of unity.
\end{rem}

\medskip

\begin{teo}\label{6-lattices}
If $G$ is as above with $\mu\neq0$ and $\dim G \geq 6$, i.e. $n\geq 2$, then $G$ admits no lattice.
\end{teo}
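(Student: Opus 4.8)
The plan is to argue by contradiction using Bock's criterion (Proposition \ref{latt}) together with Lemma \ref{raices}. Suppose that $G$ admits a lattice. Then there exists $t_0\neq 0$ such that $\phi(t_0)$ is conjugate to an integer matrix $M$. The first task is to record the spectral data of $\phi(t_0)$ read off from \eqref{fi}: it has a single real eigenvalue $e^{t_0\mu}$, while the remaining $2n$ eigenvalues are $e^{-\frac{t_0\mu}{2n}\pm i\eta}$, all of which share the common modulus $e^{-\frac{t_0\mu}{2n}}$. Since $M$ is conjugate to $\phi(t_0)$, the roots of its characteristic polynomial $p$ are exactly these numbers.

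Next I would translate the integrality of $M$ into the hypotheses of Lemma \ref{raices}. Because $B\in\mathfrak{u}(n)$ is skew-Hermitian we have $\tr B=0$, so that $\det\phi(t_0)=e^{t_0\mu}\cdot e^{-t_0\mu}=1$; hence $M\in GL(2n+1,\ZZ)$ with $\det M=1$, and its characteristic polynomial $p$ is monic of degree $2n+1$, has integer coefficients, and satisfies $p(0)=-\det M=-1$. Thus $p$ has precisely the shape $p(x)=x^{2n+1}-m_{2n}x^{2n}+\cdots+m_1x-1$ with $m_j\in\ZZ$ required in Lemma \ref{raices}.

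I would then verify the two remaining hypotheses of the lemma. Since $\mu\neq 0$ and $t_0\neq 0$ we have $t_0\mu\neq 0$, so the modulus $e^{t_0\mu}$ of the real eigenvalue differs from the common modulus $e^{-\frac{t_0\mu}{2n}}$ of the other eigenvalues (equality would force $t_0\mu\bigl(1+\tfrac{1}{2n}\bigr)=0$); consequently the real root $x_0=e^{t_0\mu}$ cannot coincide with any of the others and is therefore simple, while $x_1,\dots,x_{2n}$ all have equal modulus. Lemma \ref{raices} now applies and forces $x_0=1$, that is $e^{t_0\mu}=1$, whence $t_0\mu=0$, contradicting $t_0\neq 0$ and $\mu\neq 0$. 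This contradiction shows that no lattice exists.

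The genuinely delicate content of the argument is already isolated in Lemma \ref{raices} (the number-theoretic input via Kronecker's theorem). Once that is granted, the only real obstacle is the bookkeeping needed to confirm that an integer representative of $\phi(t_0)$ satisfies all the hypotheses of the lemma -- in particular the simplicity of the real root, which, as noted above, hinges entirely on the non-vanishing of $\mu$. Everything else reduces to the determinant computation $\det\phi(t_0)=1$ and the modulus comparison of eigenvalues.
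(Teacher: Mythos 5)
Your proposal is correct and follows essentially the same route as the paper: invoke Bock's criterion to produce $t_0\neq 0$ with $\phi(t_0)$ conjugate to an integer matrix, observe that its characteristic polynomial has the form required by Lemma \ref{raices} with a simple real root $e^{t_0\mu}$ and all other roots of equal modulus, and derive the contradiction $e^{t_0\mu}=1$. The extra bookkeeping you supply (the determinant computation giving $p(0)=-1$ and the modulus comparison establishing simplicity of the real root) is exactly what the paper leaves implicit.
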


\begin{proof}
Suppose that $G$ admits lattices, from Proposition \ref{latt} there exists $t\neq 0$ such that 
$e^{t\ad_{f_1}}$ is conjugate to an
integer matrix. Hence its characteristic polynomial $p$ has integer coefficients and it can be
written as
\[p(x)=x^{2n+1}-m_{2n}x^{2n}+m_{2n-1}x^{2n-1}+\cdots+m_1x-1\] with $m_j\in\ZZ$.
It follows from \eqref{fi} that $p$ has a simple real root $x_0=e^{t\mu}$, and the other roots are 
complex with modulus
$e^{-\frac{t\mu}{2n}}$. It follows from Lemma \ref{raices} that $e^{t\mu}=1$. Since $\mu\neq0$, then 
$t=0$, which is a
contradiction.
\end{proof}

\bigskip

\subsubsection{Lattices in dimension $4$} \label{lat-dim-4}
From Theorem $\ref{lcK}$ we have that $\g=\mathbb R\ltimes\mathbb R^3$ and 
\begin{equation}\label{mu-y}
\ad_{f_1}|_{\r^3}=\left(\begin{array}{c|cc}
\mu \\
\hline
  & -\frac{\mu}{2}& -y\\
  & y&  -\frac{\mu}{2}
\end{array}\right).
\end{equation}
We denote $\g_{(\mu,y)}=(\g,J,\pint)$ where $\ad_{f_1}$ is given by $\eqref{mu-y}$. In the 
non-K\"ahler case, i.e. $\mu\neq 0$, we
get that $\g_{(\mu,y)}$ is isomorphic to $\g_{(1,\frac{y}{\mu})}$. We denote this Lie algebra by 
$\g_b$, where $b=\frac{y}{\mu}$.

\begin{rem}
Note that $\g_b$ and $\g_{-b}$ are isomorphic. Moreover, for $b\neq 0$, $\g_b$ is isomorphic to 
$\mathfrak{r}'_{4,1/b,-{1}/{2b}}$ and $\g_0$ is isomorphic to $\mathfrak{r}_{4,-\frac12,-\frac{1}{2}}$
from \cite{ABDO}, and it follows that they are not pairwise isomorphic for $b\geq 0$.
\end{rem}

\medskip

Let us assume that the simply connected Lie group $G_b$ associated to $\g_b$ admits lattices. Then
according to Proposition \ref{latt} there exists
$t_0\in\mathbb{R}$, $t_0\neq0$, such that $e^{t_0\ad_{f_1}}$ is conjugated to a matrix with integer 
coefficients. Therefore the
characteristic polynomial of $e^{t_0\ad_{f_1}}$ is
\[f(x)=x^3-mx^2+nx-1,\] with $m,n\in\mathbb{Z}$. Note that $f$ has a simple real root $e^{t_0}\neq
1$ and two complex conjugate roots $e^{t_0(-\frac{1}{2}\pm ib)}\in\mathbb C-\mathbb R$. Indeed, if
$e^{t_0(-\frac{1}{2}\pm ib)}\in\mathbb R$, then $e^{-\frac{t_0}{2}}$ (or its opposite) is a double 
real root, and it is easy to see that this implies $e^{-\frac{t_0}{2}}=1$ and therefore $t_0=0$, a 
contradiction. In particular, $G_0$ does not admit lattices.

\medskip

Conversely, we consider $f(x)=x^3-mx^2+nx-1$ with $m,n\in\mathbb{Z}$ such that $f$ has a simple real
root $c\neq 1$ and two complex conjugate roots $\alpha, \overline{\alpha} \in\mathbb{C}-\r$. Then
$|\alpha|^2c=1$, so that $c>0$. If $\alpha=|\alpha|e^{i\phi}$ with $\phi\in(0,\pi)$ we consider the
Lie algebra $\g=\mathbb{R}\ltimes\mathbb{R}^3$ where the action is given by
\[ \ad_{f_1}|_{\r^3}=\left(\begin{array}{c|cc}
-2\log |\alpha|\\
\hline
  & \log |\alpha|& -\phi\\
  & \phi&  \log |\alpha|
\end{array}\right).\]
Then 
\[ e^{\ad_{f_1}|_{\r^3}}=\left(\begin{array}{c|cc}
|\alpha|^{-2}\\
\hline
  & |\alpha|\cos\phi& -|\alpha|\sin\phi\\
  & |\alpha|\sin\phi&  |\alpha|\cos\phi
\end{array}\right).\] 

\

Since this matrix has eigenvalues $c$, $\alpha$ and $\overline\alpha$ and they are all different, we 
have that it is conjugated to
the companion matrix
\[ \begin{pmatrix}
0 & 0 &  1\\
1 & 0 & -n\\
0 & 1 &  m
\end{pmatrix} \in \textit{SL}(3,\mathbb{Z}).\]
According to Proposition \ref{latt} the simply connected Lie group associated to $\g$ admits 
lattices.

Note that as $c\neq 1$, this Lie algebra coincides with $\g_{(-2\log|\alpha|, \phi)}$ as above and
therefore is isomorphic to
$\g_b$ with $b=\frac{\phi}{\log c}$.

\

Let $\Sigma=\{(m,n)\in\mathbb{Z}\times\mathbb{Z}: f_{m,n}(x)= x^3 - mx^2 + nx-1 \;\text{has  
roots} \; c\in\mathbb R, \alpha,\overline\alpha\in\mathbb C-\mathbb R\}$.  This region
$\Sigma$ is the set of the pairs $(m,n)\in\mathbb{Z}\times\mathbb{Z}$ such that the discriminant 
$\Delta_{m,n}$ of $f_{m,n}$ is negative, that is, $\Delta_{m,n}=-27-4m^3+18mn+m^2 n^2-4 n^3<0$ (see 
Figure \ref{fig:ejemplo}). 
 
\begin{figure}
\centering
 \includegraphics[width=0.4\textwidth]{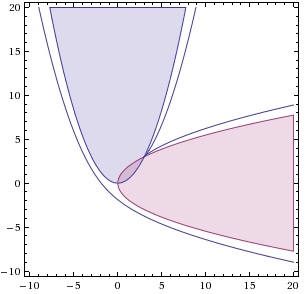}
\caption{Discriminant of $f_{m,n}< 0$}
\label{fig:ejemplo}
\end{figure}

Note that $c=1$ if and only if $m=n$, and in this case the other roots are complex conjugate if and 
only if $m=0,1,2$.

Let $\Sigma'=\Sigma-\{(0,0),(1,1),(2,2)\}$ and for any $k\in \mathbb Z$ consider the function $h_k: 
\Sigma' \to \mathbb{R}$ that assigns to $(m,n)$ the real number $\frac{\phi_k}{\log c}$ where 
$\phi_k=\phi+2k\pi$, $\phi\in(0,\pi)$. With this notation, we may state the following result.

\smallskip

\begin{teo}\label{4-lattice}
The simply connected almost abelian Lie group $G_b$ with Lie algebra $\g_b$ admits lattices if and 
only if $b\in \bigcup_{k\in\mathbb Z}\operatorname{Im} (h_k)$, a countable subset of $\r$.
\end{teo}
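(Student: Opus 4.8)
The plan is to translate the lattice question for $G_b$ entirely into Proposition \ref{latt} and then read off the answer from the root analysis already carried out in the paragraphs preceding the statement. By Proposition \ref{latt}, $G_b$ admits a lattice if and only if there is some $t_0\neq 0$ for which $\phi(t_0)=e^{t_0\ad_{f_1}}$ is conjugate to an integer matrix. I would establish this equivalence in both directions, using throughout that $\phi(t_0)$ has determinant $1$ and eigenvalues $e^{t_0}$ and $e^{-t_0/2\pm i t_0 b}$.

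For the forward implication, suppose such a $t_0$ exists. Then the characteristic polynomial of $\phi(t_0)$ is $f(x)=x^3-mx^2+nx-1$ with $m,n\in\ZZ$; its real root is $c=e^{t_0}\neq 1$ (so $c>0$ and, since $c\neq 1$ excludes the points $(0,0),(1,1),(2,2)$, we have $(m,n)\in\Sigma'$), and its remaining roots $e^{-t_0/2\pm i t_0 b}$ are genuinely non-real. Writing the upper half-plane root as $\alpha=|\alpha|e^{i\phi}$ with $\phi\in(0,\pi)$ determined by $(m,n)$, and noting $t_0=\log c$, the matching of arguments forces $t_0 b\equiv \pm\phi \pmod{2\pi}$, that is $b=(\phi+2k\pi)/\log c=h_k(m,n)$ for some $k\in\ZZ$ (modulo the sign point addressed below). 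Hence $b\in\bigcup_{k}\operatorname{Im}(h_k)$.

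For the converse I would fix $(m,n)\in\Sigma'$ and $k\in\ZZ$ and set $b=h_k(m,n)$. Replacing the off-diagonal entry $\phi$ of the model matrix constructed in the discussion before the theorem by $\phi+2k\pi$ yields an $\ad_{f_1}$ of the required form whose associated Lie algebra is $\g_{(\log c,\,\phi+2k\pi)}\cong\g_b$; since rotation by $\phi+2k\pi$ coincides with rotation by $\phi$, the exponential $e^{\ad_{f_1}}$ still has the three distinct eigenvalues $c,\alpha,\overline\alpha$ and is therefore conjugate to the companion matrix in $SL(3,\ZZ)$. Proposition \ref{latt} then produces a lattice in $G_b$. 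Finally, since $\Sigma'\subseteq\ZZ^2$ and $\ZZ$ are countable, $\bigcup_{k\in\ZZ}\operatorname{Im}(h_k)$ is a countable union of images of a countable set, hence countable.

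I expect the main obstacle to be the bookkeeping in the forward direction, not any hard analysis: unlike the case $n\geq 2$ governed by Lemma \ref{raices}, the dimension-$4$ situation carries no Kronecker-type obstruction. Two points need care. First, the argument of the complex eigenvalue is defined only modulo $2\pi$, and this is precisely what forces the union over all $k$ rather than a single map. Second, the ambiguity of which complex root equals $\alpha$ (together with the sign of $t_0$) can a priori produce $b=(-\phi+2k\pi)/\log c$; to absorb this I would record that $\bigcup_k\operatorname{Im}(h_k)$ is invariant under $b\mapsto -b$. Indeed, passing to the reciprocal polynomial $x^3-nx^2+mx-1$ interchanges $(m,n)$ with $(n,m)$, replaces $c$ by $1/c$, and leaves the upper half-plane argument equal to $\phi$, so that $h_k(n,m)=-h_k(m,n)$; this reflects the isomorphism $\g_b\cong\g_{-b}$ and shows that every sign case still lands in the claimed union.
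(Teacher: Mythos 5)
Your proposal follows the same route as the paper's proof: Bock's criterion (Proposition \ref{latt}), reading off the characteristic polynomial $x^3-mx^2+nx-1$ of $\phi(t_0)$ in the forward direction, and conjugating $e^{\ad_{f_1}}$ to the companion matrix in $SL(3,\ZZ)$ for the converse. Your explicit handling of the $\pm\phi$ ambiguity via $h_k(n,m)=-h_k(m,n)$ (the reciprocal polynomial swapping $(m,n)\leftrightarrow(n,m)$, mirroring $\g_b\cong\g_{-b}$) is a point the paper leaves implicit, and it is correct; the treatment of general $k$ in the converse is also fine, since adding $2k\pi$ to the rotation angle does not change the exponential. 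The one step you assert without justification is that the eigenvalues $e^{t_0(-\frac12\pm ib)}$ are ``genuinely non-real'' --- equivalently, that $(m,n)$ actually lies in $\Sigma$ (negative discriminant), which your parenthetical does not establish: excluding $(0,0),(1,1),(2,2)$ only handles the passage from $\Sigma$ to $\Sigma'$. Non-reality is not automatic, since $t_0b\in\pi\ZZ$ would make $\pm e^{-t_0/2}$ a double real root of $f$ (this is exactly what happens for $b=0$, where no lattice exists). The paper closes this by noting that a monic integer cubic with constant term $-1$ and a double root is reducible over $\ZZ$, and the integrality of the factors together with $e^{t_0}\cdot(e^{-t_0/2})^2=1$ forces $e^{t_0}=1$, hence $t_0=0$, a contradiction. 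You should supply this short argument; with it, your write-up matches the paper's proof.
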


\smallskip

\begin{rems}
(i) The solvmanifolds associated to $\g_b$ are Inoue surfaces of type $S^0$ \cite{Kam, T} (see 
\cite{S1} for an explicit construction of a lattice on the Lie groups $G_b$).

\smallbreak\n(ii) Note that if $(m,n)\in\Sigma$ then $(n,m)\in\Sigma$ as well, since 
$f_{n,m}(x)=-x^3 f_{m,n}(\frac1x)$. Thus
$\Sigma$ is symmetric with respect to the diagonal $y=x$.

\smallbreak\n(iii) The region $\Sigma$ contains all pairs $(m,n)$ such that $m^2<3n$ or $n^2<3m$ 
(see Figure \ref{fig:ejemplo}).

\smallbreak\n(iv) It is easy to see that for $(m,n)\in \Sigma$, the real root $c$ of $f_{m,n}$ 
satisfies $c>1$ when $n<m$, therefore $c$ is a Pisot number. Recall that a Pisot number is a real 
algebraic number greater than $1$ such that all others roots of its minimal polynomial have absolute 
value less than $1$. Similarly, if $n>m$, then $c^{-1}$ is a Pisot number. More precisely, these 
Pisot numbers belong to the class $\mathcal P$ considered in \cite{BH}.
\end{rems}

\medskip

\begin{ejemplo}
For $m=0,\, n=3$, it can be seen that the possible values of $b$ are
\[ b= \left| \frac{\arctan 
\left(\frac{\sqrt{3}(r^2+1)}{r^2-1}\right)+2k\pi}{\log(r-r^{-1})}\right|,\quad 
r=\left(\frac{1+\sqrt{5}}{2} \right)^\frac13, \quad k\in\mathbb Z. 
\]
\end{ejemplo}

\

\section{Lattices in almost abelian Lie groups with LCS structures}

In this section we begin by characterizing locally conformal symplectic structures on almost 
abelian Lie algebras. Then we study the existence of lattices in the associated simply connected 
almost abelian Lie groups: in dimension $4$ we determine all these Lie groups, and in each 
higher dimension we exhibit an almost abelian Lie group with a countable family of non isomorphic 
lattices.

\subsection{LCS almost abelian Lie algebras}

Let $(\omega,\theta)$ denote an LCS structure on an almost abelian Lie algebra $\g$ of
dimension $2n+2$. If $\u\subset \g$ is an abelian ideal of $\g$ of codimension $1$, then
$\omega|_{\u \times \u}$ has rank $2n$. Therefore there exist $f_2\in\u$ and a 
vector subspace $\vg\subset \u$ such that $\u=\r f_2\oplus \vg$, $\omega(f_2,\vg)=0$ and 
$\omega|_{\vg \times \vg}$ is non degenerate. Since $\omega$ is non degenerate on $\g$, there 
exists $f_1\in\g$ such that $\omega(f_1,f_2)=1$ and 
\begin{equation}\label{g=r.u}
\g=\r f_1\oplus \u. 
\end{equation}

The adjoint action of $f_1$ on $\u$ is given by 
\begin{equation}\label{mu}
[f_1,f_2]=\mu f_2+ v_0, 
\end{equation}
for some $v_0\in\vg$ and $\mu\in\r$, and for $x\in \vg$ we have 
\[ [f_1,x]=\eta(x) f_2+ Ax, \quad \text{with } \eta\in \vg^*, \, A\in\operatorname{End}(\vg).\]

\medskip

Since $\omega$ is non degenerate on $\vg$, there exists a basis $\{u_1,\dots,u_n,v_1,\dots,v_n\}$ of 
$\vg$ such that 
\[ \w=f^1\wedge f^2 + \sum_{i=1}^n u^i\wedge v^i, \] 
where $\{f^1, f^2, u^1,\dots,u^n,v^1,\dots,v^n\}$ is the dual basis of $\g^*$. We can identify $\vg$ 
with $\r^{2n}$, $\u$ with $\r^{2n+1}$, $\g$ with $\r\ltimes\r^{2n+1}$, and if we denote 
$M=\ad_{f_1}|_{\r^{2n+1}}$, then we can express $M$ as
\begin{equation}\label{M}
M=\left(\begin{array}{c|ccc}       
\mu &   & w^t & \\
\hline
  &   &   & \\
v_0 &   & A & \\
  &   &   & 
\end{array}\right),
\end{equation}
where $w^t=(\eta(u_1),\ldots,\eta(u_n),\eta(v_1), \ldots, \eta(v_n))$.

\medskip

We will consider two different cases, depending on whether the dimension of $\g$ is $4$ or greater
than $4$. Unlike the locally conformal K\"ahler case, the description of the Lie algebras
admitting LCS forms in dimension $4$ will be different from those Lie algebras in higher dimensions.

\

\subsubsection{Dimension of $\g\geq 6$} 

In this case, we have that $\dim\vg\geq 4$. We will see next that the corresponding Lee 
form $\theta$ is given by $\theta=a f^1$ for some $a\in\r$, $a\neq 0$, and the vector $v_0$ 
in \eqref{mu} is $v_0=0$.

Since $\omega$ is non degenerate on $\vg$, for any nonzero $x\in\vg$, there exist $x',y,y'\in 
\vg$ such that $\omega(x,y)=\omega(x,y')=0$ and $\omega(x,x')=\omega(y,y')=1$. Then:
\begin{itemize}
 \item[$\diamond$] $d\omega(x,y,y')=\theta\wedge \omega(x,y,y')$ implies that $\theta(x)=0$, 
so that $\theta|_{\vg}=0$;
 \item[$\diamond$] $d\omega(f_2,x,x')=\theta\wedge \omega(f_2,x,x')$ and the fact that 
$\theta|_{\vg}=0$ imply that $\theta(f_2)=0$. 
\end{itemize}
Thus $\theta=a f^1$ for some $a\in\r$, $a\neq 0$.

We compute next $d\omega(f_1,f_2,x)=\theta\wedge \omega(f_1,f_2,x)$. We have that 
\begin{align*}
d\omega(f_1,f_2,x) & = -\omega([f_1,f_2],x)-\omega([x,f_1],f_2)\\
                  & = -\omega(\mu f_2+v_0,x) + \omega(\eta(x)f_2+Ax,f_2)\\
                  & = -\omega(v_0,x)                                     
\end{align*}
since $\omega(f_2,\vg)=0$ and $\u$ is abelian.
On the other hand, $\theta\wedge \omega(f_1,f_2,x)=\theta(f_1)\omega(f_2,x)=0$ since $\theta=af^1$. 
As a consequence, $\omega(v_0,x)=0$ for all $x\in\vg$. Since $\omega$ is non degenerate on $\vg$, 
it follows that $v_0=0$.

\medskip

Next we take into account $d\omega(f_1,x,y)=\theta\wedge \omega(f_1,x,y)$ for $x,y\in\vg$. Since 
$\theta=af^1$, the right-hand side is $\omega(f_1,x,y)=a\,\omega(x,y)$. On the other hand,
\begin{align*}
 d\omega(f_1,x,y) & = -\omega([f_1,x], y)-\omega([y,f_1],x)\\
                  & = -\omega(\eta(x)f_2+Ax,y)-\omega(x,\eta(y)f_2+Ay)\\
                  & = -\omega(Ax,y)-\omega(x,Ay),
\end{align*}
since $f_2$ is $\omega$-orthogonal to $\vg$. Let us decompose $A=U+B$, with $U^{\ast_\omega}=U$ and 
$B^{\ast_\omega}=-B$. Recall that given a linear transformation $T$ on the symplectic vector space 
$(\vg,\omega|_{\vg\times \vg})$, its $\omega$-adjoint $T^{\ast_\omega}$ is defined by 
$\omega(Tu,v)=\omega(u,T^{\ast_\omega}v)$ for any $u,v\in \vg$. Note that $B^{\ast_\omega}=-B$ 
means that $B\in \mathfrak{sp}(\vg,\omega|_{\vg\times \vg})\simeq \mathfrak{sp}(n,\r)$. 

With this decomposition the equation above becomes $d\omega(f_1,x,y)=-2\omega(Ux,y)$, hence
\[ -2\omega(Ux,y)=a\,\omega(x,y) \quad \text{for all } x,y\in\vg.\]
It follows that $U=-\frac{a}{2}\operatorname{Id}$ and
\[ A=-\frac{a}{2}\operatorname{Id}+B, \quad \text{with } B\in \mathfrak{sp}(n,\r).\]
Therefore $M$ has the following matrix representation with respect to the basis above:
\[M=\left(\begin{array}{c|ccc}       
\mu &   & w^t & \\
\hline
   &   &   & \\
 0 &   & -\frac{a}{2}I + B & \\
   &   &   & 
\end{array}\right),\]
with $B\in \mathfrak{sp} (n,\mathbb R)$. Summarizing, we have the following result:

\medskip

\begin{teo}\label{lcs}
Let $\g$ be an $(2n+2)$-dimensional almost abelian Lie algebra with $\dim\g \geq 6$.  
Then $\g$ admits an LCS structure if and only if $\g$ is isomorphic to $\r\ltimes_M\r^{2n+1}$, 
where the adjoint action of $\r$ on $\r^{2n+1}$ is given by
\begin{equation} \label{lcs-6}
 M=\left(\begin{array}{c|ccc}       
\mu &   & w^t & \\
\hline
   &   &   & \\
 0 &   & \lambda I + B & \\
   &   &   & 
\end{array}\right),
\end{equation}
for some $\mu,\lambda\in \r$, $\lambda\neq 0$, $w\in\r^{2n}$ and $B\in \mathfrak{sp}(n,\r)$. The 
LCS form is $\omega=f^1\^f^2 + \displaystyle\sum_{i=1}^{n} u^i\^v^i$, and the Lee form is 
$\theta=-2\lambda f^1$. Moreover, $\g$ is unimodular if and only if $\lambda=-\frac{\mu}{2n}$. 
\end{teo}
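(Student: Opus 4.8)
The plan is to establish Theorem \ref{lcs} by proving both implications. Most of the forward direction (that an LCS structure forces the stated normal form) has already been carried out in the preceding computations: starting from the general matrix representation \eqref{M} of $M=\ad_{f_1}|_{\r^{2n+1}}$, the exterior-derivative identities $d\omega(x,y,y')=\theta\wedge\omega(x,y,y')$ and $d\omega(f_2,x,x')=\theta\wedge\omega(f_2,x,x')$ yielded $\theta=af^1$ with $a\neq 0$; the identity $d\omega(f_1,f_2,x)=\theta\wedge\omega(f_1,f_2,x)$ gave $v_0=0$; and the identity $d\omega(f_1,x,y)=\theta\wedge\omega(f_1,x,y)$, after splitting $A=U+B$ into $\omega$-selfadjoint and $\omega$-skewadjoint parts, forced $U=-\frac{a}{2}\I$ and $B\in\mathfrak{sp}(n,\r)$. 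Setting $\lambda=-\frac{a}{2}$ produces exactly the matrix \eqref{lcs-6}, with Lee form $\theta=-2\lambda f^1$. So the first thing I would do is simply collect these facts and note that $\lambda\neq 0$ because $a\neq 0$ (equivalently, $\theta\neq 0$, as an LCS structure is by definition non-symplectic here).

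Next I would supply the converse, which the preceding text does not spell out: given $M$ of the form \eqref{lcs-6}, one must verify that $\omega=f^1\wedge f^2+\sum_{i=1}^n u^i\wedge v^i$ together with $\theta=-2\lambda f^1$ genuinely defines an LCS structure on $\g=\r\ltimes_M\r^{2n+1}$. This is a direct check: $\omega$ is manifestly non-degenerate, $\theta$ is closed since $d f^1=0$ (because $f_1$ does not appear in any bracket image, $\r^{2n+1}$ being an abelian ideal), and one computes $d\omega$ from the brackets encoded in $M$ and confirms that $d\omega=\theta\wedge\omega$. The only brackets contributing are $[f_1,f_2]=\mu f_2$ and $[f_1,x]=\eta(x)f_2+(\lambda\I+B)x$ for $x\in\vg$, where $\eta$ corresponds to $w$; the key point is that the $w$-part and the $\mu$-part feed into $d\omega(f_1,f_2,\cdot)$ and $d\omega(f_1,f_2,f_2)$ in a way that still matches $\theta\wedge\omega$, while $B\in\mathfrak{sp}(n,\r)$ ensures the $\vg\times\vg$ part of $d\omega$ reproduces $-2\lambda\,\omega|_{\vg}=a\,\omega|_{\vg}$.

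Finally I would address the unimodularity claim. The Lie algebra $\g$ is unimodular if and only if $\tr(\ad_{f_1})=0$, since $\r^{2n+1}$ is abelian and all other adjoint operators have zero trace. From \eqref{lcs-6} we read off $\tr M=\mu+\tr(\lambda\I+B)=\mu+2n\lambda+\tr B$, and $\tr B=0$ because $B\in\mathfrak{sp}(n,\r)$ (symplectic Lie algebra elements are traceless). Hence $\tr M=\mu+2n\lambda$, which vanishes precisely when $\lambda=-\frac{\mu}{2n}$, as asserted.

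I expect no serious obstacle: the forward direction is essentially assembled from computations already in place, and the converse and unimodularity checks are routine once one notes $\tr B=0$ for $B\in\mathfrak{sp}(n,\r)$. The one point requiring a little care is the converse verification of $d\omega=\theta\wedge\omega$ when $w\neq 0$, since $w$ does not appear in the final form of $\omega$ or $\theta$; I would make sure that the terms in $d\omega$ produced by the off-diagonal $w^t$ block do not obstruct the identity, which holds because those terms land in components ($f^1\wedge f^2\wedge(\cdot)$ paired against $\vg$) that $\theta\wedge\omega$ also produces or that cancel by $\omega(f_2,\vg)=0$.
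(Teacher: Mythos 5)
Your proposal is correct and follows essentially the same route as the paper: the forward direction is exactly the chain of identities $d\omega(x,y,y')$, $d\omega(f_2,x,x')$, $d\omega(f_1,f_2,x)$, $d\omega(f_1,x,y)$ worked out just before the theorem (the paper simply says ``summarizing''), and your converse and unimodularity checks (including the observation that the $w$-terms are killed by $\omega(f_2,\vg)=0$ and that $\tr B=0$ for $B\in\mathfrak{sp}(n,\r)$) are the routine verifications the paper leaves implicit. No gaps.
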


\medskip

\begin{rems}
(i) In the same way, it can be seen that any symplectic almost abelian Lie algebra is isomorphic to 
$\r\ltimes_M\r^{2n+1}$ with $M$ as in \eqref{lcs-6} with $\lambda=0$ (cf. \cite{LW}).

\smallbreak\n(ii) When $w=0$ and $B\in\mathfrak{u}(n)\subset\mathfrak{sp}(n,\r)$, the LCS form
is in fact LCK. Indeed, in the notation of the theorem, the almost complex structure $J$ defined 
by $Jf_1=f_2,\, Ju_i=v_i,\, i=1,\ldots, n$, is integrable and the metric 
$\pint=\omega(\cdot,J\cdot)$ is compatible with $J$. 
\end{rems}

\medskip

In the following result we show that the LCS structures constructed in Theorem \ref{lcs} are all of 
the second kind. Therefore this theorem provides a way of producing many examples of Lie algebras 
with this type of structures.

\smallskip

\begin{cor}\label{2kind}
Let $\g$ be an almost abelian Lie algebra with $\dim\g \geq 6$. If $\g$ admits an LCS structure, 
then it is of the second kind.
\end{cor}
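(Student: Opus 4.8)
The plan is to verify the definition directly: an LCS structure is of the second kind precisely when the Lee morphism $\theta|_{\g_\omega}$ vanishes identically, so it is enough to locate $\g_\omega$ inside $\ker\theta$. By Theorem~\ref{lcs} the Lee form is $\theta=-2\lambda f^1$ with $\lambda\neq 0$, and since $f^1$ is dual to $f_1$ its kernel is exactly the abelian ideal $\u=\r f_2\oplus\vg$. Thus the whole statement reduces to proving the inclusion $\g_\omega\subseteq\u$, equivalently that no infinitesimal automorphism has a nonzero $f_1$-component.

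To prove this inclusion I would take $x\in\g_\omega$, write $x=c f_1+u$ with $c\in\r$ and $u\in\u$, and feed the defining condition \eqref{autom} only the pairs $y,z\in\vg$. Because $\u$ is abelian we have $[u,y]=0$, so $[x,y]=c\,\ad_{f_1}y=c\bigl(\eta(y)f_2+Ay\bigr)$; as $\w(f_2,\vg)=0$ the $f_2$-term is invisible to $\w(\cdot,z)$ for $z\in\vg$, and the condition collapses to
\[ 0=\w([x,y],z)+\w(y,[x,z])=c\bigl(\w(Ay,z)+\w(y,Az)\bigr)\qquad\text{for all }y,z\in\vg. \]
Now I substitute the normal form $A=\lambda\I+B$ of \eqref{lcs-6}. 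Since $B\in\mathfrak{sp}(n,\r)$ is $\w$-skew on $\vg$, one has $\w(By,z)+\w(y,Bz)=0$, whence $\w(Ay,z)+\w(y,Az)=2\lambda\,\w(y,z)$, and the display becomes $2c\lambda\,\w(y,z)=0$ for all $y,z\in\vg$.

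Finally, $\w|_{\vg\times\vg}$ is non-degenerate, so some pair $y,z\in\vg$ has $\w(y,z)\neq 0$; together with $\lambda\neq 0$ this forces $c=0$, i.e. $x\in\u$. Hence $\g_\omega\subseteq\u=\ker\theta$, the Lee morphism is identically zero, and the structure is of the second kind. There is no genuine obstacle here: the argument hinges on the single observation that restricting the infinitesimal-automorphism equation to the symplectic block $\vg$ isolates the scalar part $\lambda\I$ of $A$ while the symplectic part $B$ cancels automatically. The only facts one must keep track of are that $v_0=0$ and $\theta=-2\lambda f^1$ (so that $\ker\theta=\u$ and the brackets restricted to $\vg$ stay clean) and that $\vg$ is genuinely symplectic---both guaranteed by the normal form of Theorem~\ref{lcs}, which is where the hypothesis $\dim\g\geq 6$ enters.
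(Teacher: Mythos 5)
Your proof is correct and follows essentially the same route as the paper: both arguments write $x=cf_1+u\in\g_\omega$ and reduce the infinitesimal-automorphism condition on pairs from the abelian ideal to the single equation $c\lambda\,\omega(y,z)=0$, forcing $c=0$ and hence $\g_\omega\subseteq\ker\theta$. The only cosmetic difference is that the paper obtains this equation by evaluating $d\omega=\theta\wedge\omega$ on $(x,y,z)$ (the left side vanishing because $x\in\g_\omega$ and $\u$ is abelian), whereas you expand the automorphism condition directly via the normal form $A=\lambda\I+B$ with $B$ $\omega$-skew --- the same computation in different packaging.
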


\begin{proof}
Let $(\omega, \theta)$ be the LCS structure on $ \g$. From Theorem \ref{lcs} we have that 
$\theta=-2\lambda 
f^1$. Recall that the LCS structure is of the second kind if $\theta|_{\g_\omega}\equiv 0$, where $\g_\omega$ is given by \eqref{autom}. 

Let $x\in \g_\omega$, $x=cf_1+v$ with $c\in \r$ and $v\in \r^{2n+1}$. We compute 
$d\omega(x,y,z)=\theta\wedge\omega(x,y,z)$ for $y,z\in\r^{2n+1}$.
Since $x\in\g_\omega$ we have $c\lambda\,\omega(y,z)=0$ for all $y,z\in\r^{2n+1}$. Since $\omega$ 
is non degenerate and $\lambda\neq0$ we obtain $c=0$. Therefore $\theta|_{\g_\omega}$ is identically zero, hence the LCS structure is of the second kind.
\end{proof}

\

\subsubsection{Dimension of $\g=4$}

In this section we will proceed in a different way to determine the $4$-dimensional almost abelian 
Lie algebras admitting an LCS structure. 

Let $\g$ be a $4$-dimensional almost abelian Lie algebra equipped with an LCS structure, 
$\g=\r\ltimes_M\r^3$, where $M$ denotes the adjoint action of $\r$ on $\r^3$, and let $\mu$ be a 
real eigenvalue of $M$. Then we have
\[
M=\left(\begin{array}{c|ccc}       
\mu &   & w^t & \\
\hline
   &   &   & \\
 0 &   & A & \\
   &   &   & 
\end{array}\right),
\]
where $A\in \mathfrak{gl} (2,\mathbb R)$ for some basis $\{f_1,f_2,f_3,f_4\}$.  

\begin{lema}\label{M0}
With notation as above, if $\tr(A)\neq 0$ then $\g$ admits an LCS structure.
\end{lema}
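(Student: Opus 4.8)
The plan is to exhibit an explicit LCS structure $(\omega,\theta)$ on $\g$ directly, imitating the construction of Theorem \ref{lcs} in the case $n=1$. Writing $\{f^1,f^2,f^3,f^4\}$ for the basis dual to $\{f_1,f_2,f_3,f_4\}$, I first record the differentials of the coframe. Since every bracket of $\g$ lands in the abelian ideal $\u=\operatorname{span}\{f_2,f_3,f_4\}$, the $1$-form $f^1$ is closed, $df^1=0$, while the block form of $M$ in \eqref{M} gives $df^2=-\mu\, f^1\wedge f^2-w_1\, f^1\wedge f^3-w_2\, f^1\wedge f^4$ and analogously $df^3=-A_{11}f^1\wedge f^3-A_{12}f^1\wedge f^4$, $df^4=-A_{21}f^1\wedge f^3-A_{22}f^1\wedge f^4$, where each surviving term already carries a factor $f^1$.

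I then propose $\omega=f^1\wedge f^2+f^3\wedge f^4$ and $\theta=-\tr(A)\,f^1$. Non-degeneracy of $\omega$ is immediate from $\omega\wedge\omega=2\,f^1\wedge f^2\wedge f^3\wedge f^4\neq 0$, and $\theta$ is closed because $df^1=0$. The main computation is to verify $d\omega=\theta\wedge\omega$. The key observation is that the $f^1\wedge f^2$ summand contributes nothing to $d\omega$: indeed $d(f^1\wedge f^2)=-f^1\wedge df^2=0$, since every term of $df^2$ already contains $f^1$. In particular the off-diagonal vector $w$ and the eigenvalue $\mu$ are irrelevant to the LCS equation. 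Thus $d\omega$ reduces to $d(f^3\wedge f^4)=df^3\wedge f^4-f^3\wedge df^4$, which after a short sign bookkeeping equals $-(A_{11}+A_{22})\,f^1\wedge f^3\wedge f^4=-\tr(A)\,f^1\wedge f^3\wedge f^4$; and since $f^1\wedge f^1=0$, this is exactly $\theta\wedge\omega$.

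Finally, $\theta$ is nonzero precisely because $\tr(A)\neq 0$, so $(\omega,\theta)$ is a genuine LCS structure on $\g$. Equivalently, setting $\lambda=\tfrac12\tr(A)\neq 0$ one may split $A=\lambda\,\I+B$ with $\tr B=0$, i.e.\ $B\in\mathfrak{sp}(1,\r)$, so that $M$ takes the same shape as \eqref{lcs-6} and $(\omega,\theta)$ is the $n=1$ instance of the structure produced in Theorem \ref{lcs}.

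I do not expect a genuine obstacle here: the argument is a direct verification, the only delicate point being the signs in computing $d(f^3\wedge f^4)$ and the remark that the $w$-terms and the $f^1\wedge f^2$ block drop out of the LCS equation. The conceptual contrast with the higher-dimensional Theorem \ref{lcs} is that in dimension $4$ no constraint is forced on $A$ beyond $\tr(A)\neq 0$, because every $2\times 2$ matrix splits as a scalar multiple of $\I$ plus a traceless, hence symplectic, part.
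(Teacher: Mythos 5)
Your proposal is correct and coincides with the paper's own proof: the paper exhibits exactly the same pair $\omega=f^1\wedge f^2+f^3\wedge f^4$, $\theta=-\tr(A)\,f^1$ and leaves the verification of $d\omega=\theta\wedge\omega$ as "easy to see," which is precisely the computation you carry out. Your sign bookkeeping and the observation that the $w$-terms and the $f^1\wedge f^2$ block drop out are accurate.
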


\begin{proof}
It is easy to see that $\omega=f^1\^f^2 +f^3\^f^4$ and $\theta=-\tr(A)f^1$ satisfy 
$d\omega=\theta\^\omega$ and $d\theta=0$, where $\{f^1,f^2,f^3,f^4\}$ is the dual basis of $\g^*$.
\end{proof}

Given $\g=\r\ltimes_M\r^3$, according to Lemma \ref{ad-conjugated}, we may assume that $M$ is in 
its canonical Jordan form, up to scaling. In this case, there are four different 
possibilities for $M$:
\[M_1=\begin{pmatrix}       
\lambda_1 &   &  \\
   & \lambda_2  &   \\
  &   &  \lambda_3 
\end{pmatrix}, \,
M_2^{\mu,\lambda}=\begin{pmatrix}       
\mu &   &  \\
   & \lambda  & 1  \\
  &   &  \lambda 
\end{pmatrix}, \,
M_3^\mu=\begin{pmatrix}       
\mu &  1 &  \\
   & \mu  & 1  \\
  &   &  \mu 
\end{pmatrix}, \,
M_4^{\mu,\lambda}=\begin{pmatrix}       
\mu &   &  \\
   & \lambda  & -1  \\
  & 1  &  \lambda 
\end{pmatrix},\]
for $\lambda, \mu, \lambda_i\in \r$, $\lambda_1^2+\lambda_2^2+\lambda_3^2\neq 0$.

\medskip

The only cases that are not covered by Lemma \ref{M0} are the following:
\[M_2^{0,0}= \begin{pmatrix}       
0 &   &  \\
   & 0  & 1  \\
  &   &  0 
\end{pmatrix}, \quad
M_3^0=\begin{pmatrix}       
0 &  1 &  \\
   & 0  & 1  \\
  &   &  0 
\end{pmatrix}, \quad
M_4^{\mu,0}=\begin{pmatrix}       
\mu &   &  \\
   & 0  & -1  \\
  & 1  &  0 
\end{pmatrix}.\]


By a direct computation it can be seen that $\g=\r\ltimes_{M}\r^3$ admits LCS structures for $M=M_2^{0,0},\,M_3^0, \,M_4^{0,0}$, whereas for $M=M_4^{\mu,0}$ with $\mu\neq 0$ it does not admit any. We can summarize these results in the next theorem, where we use the notation from \cite{ABDO}.

\medskip

\begin{teo}\label{4-lcs}
Let $\g$ be a $4$-dimensional almost abelian Lie algebra with an LCS structure. Then $\g$ 
is isomorphic to one of the following Lie algebras:  

\smallskip

$\begin{aligned}
\mathfrak h_3\times\mathbb R: & \, [e_1,e_2]=e_3 \\
\mathfrak n_4: & \, [e_1,e_2]=e_3 , [e_1,e_3]=e_4 \\
\mathfrak r_{3,\lambda}\times\mathbb R: & \, [e_1,e_2]=e_2, [e_1,e_3]=\lambda e_3\\ 
\mathfrak r_{4,\mu,\lambda}: & \, [e_1,e_2]=e_2, [e_1,e_3]= \mu e_3, [e_1,e_4]=\lambda e_4, \; \mu\lambda\neq0 \\
\mathfrak r_3\times\mathbb R: & \, [e_1,e_2]=e_2, [e_1,e_3]= e_2 + e_3  \\
\mathfrak r_{4,\lambda}: & \, [e_1,e_2]=e_2, [e_1,e_3]=\lambda e_3, [e_1,e_4]= e_3 + \lambda e_4 \\
\mathfrak r_4: & \, [e_1,e_2]=e_2, [e_1,e_3]=e_2 + e_3, [e_1,e_4]=e_3 + e_4 \\
\mathfrak r'_{3,\lambda}\times\mathbb R: & \, [e_1, e_2]=\lambda e_2 -e_3, [e_1, e_3]=e_2 + \lambda e_3 \\
\mathfrak r'_{4,\mu,\lambda}: & \, [e_1,e_2]=\mu e_2, [e_1,e_3]=\lambda e_3-e_4, [e_1,e_4]=e_3+\lambda e_4, \, \mu\neq0,\, \lambda\neq0 \\
\end{aligned}$
\end{teo}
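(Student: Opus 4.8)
The plan is to classify by the conjugacy class of $M=\ad_{f_1}|_{\r^3}$, which by Lemma \ref{ad-conjugated} determines $\g$ up to isomorphism once we allow rescaling of $M$. Bringing $M$ to Jordan normal form leaves exactly the four types $M_1,M_2^{\mu,\lambda},M_3^\mu,M_4^{\mu,\lambda}$ listed above, so it suffices to decide, for each type, whether $\g=\r\ltimes_M\r^3$ carries a non-symplectic LCS structure, and then to read off the corresponding algebra from \cite{ABDO}. The principal existence tool is Lemma \ref{M0}: after writing $M$ with a chosen real eigenvalue $\mu$ isolated in the $f_2$-direction as in the block form preceding that lemma, if the induced block $A$ on $\r^2$ satisfies $\tr(A)\neq0$, then $(\omega,\theta)=(f^1\^f^2+f^3\^f^4,\,-\tr(A)f^1)$ is an LCS structure with $\theta\neq0$. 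Since a real $3\times3$ matrix always has a real eigenvalue and $\tr(A)=\tr(M)-\mu$, the task reduces to checking whether some admissible choice of $\mu$ makes $\tr(M)-\mu\neq0$.

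First I would run this check over the four types. For $M_1$ one of the three eigenvalues can always be isolated so that the other two have nonzero sum, because the only obstruction (all three pairwise sums vanishing) forces $\lambda_1=\lambda_2=\lambda_3=0$, contradicting $\lambda_1^2+\lambda_2^2+\lambda_3^2\neq0$. For $M_2^{\mu,\lambda}$ and $M_3^\mu$ the same works unless every eigenvalue is $0$, i.e. unless $M=M_2^{0,0}$ or $M=M_3^0$; and for $M_4^{\mu,\lambda}$, whose only real eigenvalue is $\mu$, Lemma \ref{M0} applies precisely when $\lambda\neq0$. Thus every type admits a proper LCS structure except possibly $M_2^{0,0}$ (isomorphic to $\mathfrak h_3\times\r$), $M_3^0$ (isomorphic to $\mathfrak n_4$) and $M_4^{\mu,0}$. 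For the exceptional matrices $M_2^{0,0},M_3^0,M_4^{0,0}$ I would simply exhibit an explicit pair $(\omega,\theta)$ with $\theta\neq0$ on the Chevalley--Eilenberg complex; for instance, for $M_4^{0,0}$ the vector $f_2$ is central, so $f^2$ is closed, and $\theta=f^2$ together with the non-degenerate form $\omega=f^1\^f^3+f^2\^f^4$ solves $d\omega=\theta\^\omega$.

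The crux is the non-existence for $M_4^{\mu,0}$ with $\mu\neq0$, where Lemma \ref{M0} gives no information and one must rule out all LCS forms directly. Here the brackets are $[f_1,f_2]=\mu f_2$, $[f_1,f_3]=f_4$, $[f_1,f_4]=-f_3$, so $df^2=-\mu f^1\^f^2$, $df^3=f^1\^f^4$, $df^4=-f^1\^f^3$. Imposing $d\theta=0$ on $\theta=\sum a_if^i$ forces $a_2=a_3=a_4=0$ (this is where $\mu\neq0$ is used, to kill the $f^2$-direction), so $\theta=a_1f^1$. Writing $\omega=\sum_{i<j}b_{ij}f^i\^f^j$ and comparing $d\omega$ with $\theta\^\omega$ component by component, the $f^1\^f^3\^f^4$ coefficient gives $a_1b_{34}=0$, while the rotation block forces $b_{23}=b_{24}=0$; but then $\omega\^\omega$ is a nonzero multiple of $b_{12}b_{34}$, so non-degeneracy requires $b_{34}\neq0$ and hence $a_1=0$. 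Thus every LCS form on this Lie algebra is symplectic and there is no proper LCS structure, which is exactly why $\mathfrak r'_{4,\mu,\lambda}$ occurs in the statement only for $\lambda\neq0$.

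Finally I would translate the surviving Jordan types into the notation of \cite{ABDO}, normalizing one eigenvalue by the scaling freedom of Lemma \ref{ad-conjugated}: the diagonal type $M_1$ yields $\mathfrak r_{3,\lambda}\times\r$ and $\mathfrak r_{4,\mu,\lambda}$, the type $M_2$ yields $\mathfrak h_3\times\r$, $\mathfrak r_3\times\r$ and $\mathfrak r_{4,\lambda}$, the type $M_3$ yields $\mathfrak n_4$ and $\mathfrak r_4$, and the type $M_4$ yields $\mathfrak r'_{3,\lambda}\times\r$ and $\mathfrak r'_{4,\mu,\lambda}$ with $\lambda\neq0$; assembling these gives precisely the list in the statement. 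I expect the routine but slightly delicate parts to be the bookkeeping in this last identification and the verification that the explicit forms for $M_2^{0,0}$, $M_3^0$ and $M_4^{0,0}$ are non-degenerate and satisfy $d\omega=\theta\^\omega$; the one genuinely non-formal step is the non-existence argument for $M_4^{\mu,0}$ with $\mu\neq0$.
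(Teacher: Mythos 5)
Your proposal is correct and follows essentially the same route as the paper: reduce to the Jordan types of $\ad_{f_1}|_{\r^3}$ via Lemma \ref{ad-conjugated}, apply Lemma \ref{M0} whenever a real eigenvalue can be isolated with $\tr(A)\neq 0$, exhibit explicit structures for the exceptional matrices $M_2^{0,0}$, $M_3^0$, $M_4^{0,0}$, and rule out $M_4^{\mu,0}$ with $\mu\neq 0$ by the same coordinate computation (your ordering of the steps $b_{23}=b_{24}=0$, then $b_{34}\neq 0$, then $a_1=0$ is logically equivalent to the paper's $a_6=0$ followed by $a_4=a_5=0$). If anything, you spell out the case-by-case application of Lemma \ref{M0} more explicitly than the paper, which delegates the existence verifications to its Table \ref{1y2kind}.
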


\

\begin{proof}
We prove first that, as mentioned above, $\g=\r f_1\ltimes_M\r^3$ does not admit LCS structures for $M=M_4^{\mu,0}$ with $\mu\neq 0$. Let $\{f^1,f^2,f^3,f^4\}$ be the dual basis of $\g^*$, and let us assume that $(\omega,\theta)$ is an LCS structure on $\g$, where
\begin{align*}
\omega & =a_1f^1\^f^2 +a_2f^1\^f^3 +a_3f^1\^f^4 +a_4f^2\^f^3 +a_5f^2\^f^4 +a_6f^3\^f^4, \\ 
\theta & =b_1f^1+b_2f^2+b_3f^3+b_4f^4,
\end{align*}
for some $a_i, b_j\in\r$. Since $\theta$ is closed, we have that $b_2=b_3=b_4=0$.
Computing $d\omega=\theta\^\omega$ and using that $\theta\neq0$ we have 
$a_6=0$, $b_1a_4=-\mu a_4+a_5$ and $b_1a_5=-a_4-\mu a_5.$
Therefore $a_4=a_5=0$, which is a contradiction with the fact that $\omega$ is non degenerate.

\smallskip

To finish the proof we refer to Table \ref{1y2kind}, where LCS structures for all the Lie algebras in the statement are exhibited.
\end{proof}

\medskip

\begin{rems}\label{lcs-unimod} (i) The Lie algebra $\g=\r\ltimes_{M_4^{0,0}}\r^3$ corresponds to 
the Lie algebra $\mathfrak r'_{3,0}\times\r$, while $\g=\r\ltimes_{M_4^{\mu,0}}\r^3$, $\mu\neq0$, 
corresponds to $\mathfrak r'_{4,\mu,0}$.

\smallbreak\n(ii) Among the Lie algebras in Theorem \ref{4-lcs}, the unimodular ones are:  
$\mathfrak h_3\times\mathbb R$, $\mathfrak n_4$, $\mathfrak r_{3,-1}\times\mathbb R$, 
$\mathfrak r_{4,\lambda,-(1+\lambda)}$, $\mathfrak r_{4,-\frac12}$, $\mathfrak r'_{3,0}\times\mathbb R$ 
and $\mathfrak r'_{4,\lambda,-\lambda/2}$.

\smallbreak\n(iii) According to \cite{Ov}, the Lie algebras $\mathfrak r_{3,\lambda}\times\mathbb R$ 
($\lambda\neq 0,-1$), $\mathfrak r_{4,\mu,\lambda}$ ($\mu\neq-\lambda$ or $\mu\neq-1$), 
$\mathfrak r_3\times\mathbb R$, $\mathfrak r_{4,\lambda}$ ($\lambda\neq0,-1$),
$\mathfrak r_4$, $\mathfrak r'_{3,\lambda}\times\mathbb R$ ($\lambda\neq0$) and
$\mathfrak r'_{4,\mu,\lambda}$ do not admit any symplectic structures.

\smallbreak\n(iv) Note that $\mathfrak r_{3,-1}$ is the Lie algebra $\mathfrak e(1, 1)$ (it is also
denoted by $\mathfrak{sol}^3$) of the group of rigid motions of Minkowski $2$-space, and
$\mathfrak r'_{3,0}$ is the Lie algebra $\mathfrak e(2)$ of the group of rigid motions of Euclidean
$2$-space. 
\end{rems}

\bigskip

Next, we will study whether the LCS structures on these $4$-dimensional almost abelian Lie algebras 
are of first or second kind. We prove first a result which holds in any dimension.

\begin{lema}
Let $\g=\r f_1\ltimes_M\u$ be an almost abelian Lie algebra as in $\eqref{g=r.u}$ endowed with an 
LCS structure $(\omega, \theta)$. If $M$ is invertible then such LCS structure is of the second 
kind. 
\end{lema}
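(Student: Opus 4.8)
The plan is to show that the Lee morphism, namely the restriction $\theta|_{\g_\w}$, vanishes identically, which is exactly the definition of an LCS structure of the second kind. The argument proceeds in two essentially independent steps: first I pin down the shape of the Lee form $\theta$ using only that it is closed together with the hypothesis that $M$ is invertible, and then I show that every infinitesimal automorphism lies in the kernel of $\theta$.

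For the first step, recall that $\u$ is an abelian ideal of codimension one, so the commutator ideal is $[\g,\g]=[\r f_1,\u]=M\u$. Since $M$ is invertible we get $M\u=\u$, that is $[\g,\g]=\u$. On the other hand, closedness of a left-invariant $1$-form means $\theta([x,y])=0$ for all $x,y\in\g$, i.e. $\theta$ vanishes on $[\g,\g]$. Combining these, $\theta|_\u=0$, so $\theta=a\,f^1$ where $a=\theta(f_1)$ and $f^1\in\g^*$ is determined by $f^1(f_1)=1$, $f^1|_\u=0$. This is precisely the point where invertibility of $M$ is essential: without it $\theta$ could carry nonzero components along $\u$, and indeed in dimension $4$ with $M$ non-invertible one does find first-kind structures. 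If $a=0$ then $\theta=0$, $\w$ is symplectic, and the conclusion is immediate, so I may assume $a\neq0$.

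For the second step I take an arbitrary $x=c f_1+u\in\g_\w$ with $c\in\r$ and $u\in\u$, and I aim to prove $c=0$; since $\theta=af^1$ this yields $\theta(x)=ac=0$. The idea is to compare two evaluations of the single quantity $\w(My,z)+\w(y,Mz)$ for $y,z\in\u$. On one hand, the infinitesimal-automorphism condition \eqref{autom} applied with $y,z\in\u$ gives $\w([x,y],z)+\w(y,[x,z])=c\bigl(\w(My,z)+\w(y,Mz)\bigr)=0$, using that $\u$ is abelian so $[x,y]=cMy$ and $[x,z]=cMz$. On the other hand, evaluating the defining identity \eqref{g-lck-0} on the triple $(f_1,y,z)$ and substituting $\theta=af^1$ gives $\w(My,z)+\w(y,Mz)=-a\,\w(y,z)$. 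If $c\neq0$ the first relation forces the left-hand side to vanish, whence $a\,\w(y,z)=0$ for all $y,z\in\u$; but $\w|_{\u\times\u}$ has rank $2n\geq2$, hence is nonzero, contradicting $a\neq0$. Therefore $c=0$, i.e. $\g_\w\subseteq\u=\ker\theta$, and $\theta|_{\g_\w}\equiv0$.

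I expect the only delicate point to be the bookkeeping in the first step: one must be sure that closedness together with invertibility genuinely eliminate every $\u$-component of $\theta$, since it is exactly this that upgrades the dimension-$\geq6$ computation of Corollary \ref{2kind} (where $\theta=-2\lambda f^1$ was read off from the normal form of Theorem \ref{lcs}) to a dimension-free statement. Once $\theta=af^1$ is in hand, the comparison in the second step is a short computation entirely parallel to the proof of Corollary \ref{2kind}.
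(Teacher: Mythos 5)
Your proof is correct and follows essentially the same route as the paper's: invertibility of $M$ gives $[\g,\g]=\u$, closedness of $\theta$ then forces $\theta=af^1$, and evaluating the LCS identity together with the infinitesimal-automorphism condition on triples with two entries in $\u$ kills the $f_1$-component of any element of $\g_\w$, so $\g_\w\subseteq\u=\ker\theta$. The paper phrases the last step as a single computation of $d\w(z,x,y)=\theta\wedge\w(z,x,y)$ for $z\in\g_\w$, but this is the same comparison you make.
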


\begin{proof}
Indeed, if $M$ is invertible then $[\g,\g]=\u$. Since $\theta([\g,\g])=0$, it follows that 
$\theta=cf^1$ for some $c\in\r,\, c\neq0$. Let $z=af_1+v\in\g_\omega$ with $a\in\r, v\in\u$. Computing $d\omega(z,x,y)=\theta\^\omega(z,x,y)$ for all $x,y\in\u$, we obtain that $a=0$ and therefore $\g_\omega\subset\u$.

\end{proof}

Let $\g=\r\ltimes_M\r^{3}$ be a $4$-dimensional almost abelian Lie algebra equipped with an LCS 
structure. The cases included in the lemma above correspond to: $\mathfrak r_{4,\mu,\lambda}$, 
$\mathfrak r_{4,\lambda}$ ($\lambda\neq0$), $\mathfrak r_4$ and $\mathfrak r'_{4,\mu,\lambda}$, 
$\lambda\neq0$. Therefore any LCS structure on these Lie algebras is of the second kind.

Let us suppose  now that $M$ is not invertible. It was proved in \cite{BM} that any LCS structure on 
a nilpotent Lie algebra is of the first kind, so that we may assume that $M$ is not nilpotent. The 
Jordan form of such a matrix can be one of the following:
\[ M_1=\begin{pmatrix}       
\lambda_1 &   &  \\
   & \lambda_2  &   \\
  &   &  \lambda_3 
\end{pmatrix} \text{with} \,\,\, \lambda_1\lambda_2\lambda_3=0, \qquad
M_2^{0,\lambda}=\begin{pmatrix}       
0 &   &  \\
   & \lambda  & 1  \\
  &   &  \lambda
\end{pmatrix} \text{with} \,\,\, \lambda\neq0, \]
\[ M_2^{\mu,0}=\begin{pmatrix}       
\mu &   &  \\
   & 0  & 1  \\
  &   &  0 
\end{pmatrix} \text{with} \,\,\, \mu\neq0, \qquad
M_4^{0,\lambda}=\begin{pmatrix}       
0 &   &  \\
 & \lambda  & -1  \\
 & 1  &  \lambda 
\end{pmatrix}.\]

By direct computations we can verify that all these Lie algebras admit both LCS structures of 
the first kind and of the second kind, except for the case $M_4^{0,0}$, which 
admits only LCS structures of the first kind. This Lie algebra corresponds to $\mathfrak 
r'_{3,0}\times\r$ in Theorem \ref{4-lcs}.

Let us show the last statement, i.e. the almost abelian Lie algebra $\mathfrak r'_{3,0}\times\r$ 
admits LCS structures only of the first kind. Indeed, let us suppose that 
$\omega=a_1f^{12}+a_2f^{13}+a_3f^{14}+a_4f^{23}+a_5f^{24}+a_6f^{34}$, 
$\theta=b_1f^1+b_2f^2+b_3f^3+b_4f^4$ is an LCS structure on this Lie algebra with $a_i, b_j\in\r$. 
From $d\omega=\theta\^\omega$ we obtain that $b_2\neq0$. On the other hand, 
$f_2\in\mathfrak{z}(\g)$, then $\textrm{L}_{f_2}\omega=0$. Thus $f_2\in\g_\omega$ and 
$\theta(f_2)=b_2\neq0$. Therefore $(\theta, \omega)$ is of the first kind.

\medskip

We can summarize these results in Table $\ref{1y2kind}$, where we exhibit LCS structures of 
the first or second kind for each Lie algebra. The empty spaces mean that the corresponding 
Lie algebra does not admit any LCS structure of that specific kind.

\begin{table}[h]
\begin{center}
\renewcommand{\arraystretch}{1.2}
\begin{tabular}{|c|c|c|}\hline

Lie algebra & LCS first kind &  LCS second kind  \\ \hline

$\mathfrak h_3\times\mathbb R$ & $\omega=e^1\wedge e^2+e^3\wedge e^4$ & \\
& $\theta=-e^4$ & \\ 
\hline

$\mathfrak n_4$ & $\omega=e^1\wedge e^3+e^2\wedge e^4$ & \\
& $\theta=e^2$ &\\ 
\hline

$\mathfrak r_{3,\lambda}\times\mathbb R$ & $\omega=2e^1\wedge e^2+(1+\lambda) e^1\wedge e^3-e^2\wedge e^4-e^3\wedge e^4$ & $\omega=e^1\wedge e^4-e^2\wedge e^3$ \\
($\lambda\neq-1$) & $\theta=e^1+e^4$ & $\theta=-(1+\lambda)e^1$ \\ 
\hline

$\mathfrak r_{3,-1}\times\mathbb R$ & $\omega=2e^1\wedge e^2-e^2\wedge e^4-e^3\wedge e^4$ & $\omega=e^1\wedge e^2-e^3\wedge e^4$ \\
 & $\theta=e^1+e^4$ & $\theta=e^1$ \\
\hline

$\mathfrak r_{4,\mu,\lambda}$ & & $\omega=e^1\wedge e^3+e^2\wedge e^4$ \\
$(\lambda\neq -1)$ & & $\theta=-(\lambda+1)e^1$ \\ \hline

$\mathfrak r_{4,\mu,-1}$ & & $\omega=e^1\wedge e^2+e^3\wedge e^4$ \\
($\mu\neq 1$) & & $\theta=(1-\mu)e^1$, \\  \hline

$\mathfrak r_{4,1,-1}$ & & $\omega=e^1\wedge e^4+e^2\wedge e^3$ \\
& & $\theta=-2e^1$ \\ \hline

$\mathfrak r_3\times\mathbb R$ & $\omega=e^1\wedge e^2+2e^1\wedge e^3-e^2\wedge e^4-e^3\wedge e^4$ & $\omega=e^1\wedge e^4-e^2\wedge e^3$ \\
& $\theta=e^4$ & $\theta=-2e^1$ \\ \hline

$\mathfrak r_{4,\lambda}$ & & $\omega=e^1\wedge e^2-e^3\wedge e^4$ \\
($\lambda\neq0$) & & $\theta=-2\lambda e^1$ \\ \hline

$\mathfrak r_{4,0}$ & $\omega=-e^1\wedge e^2+e^2\wedge e^4+e^3\^e^4$ & $e^1\wedge e^3+e^2\wedge e^4$ \\
& $\theta=e^4$ & $\theta=-e^1$\\ \hline

$\mathfrak r_4$ & & $\omega=e^1\wedge e^2+e^3\wedge e^4$\\
& & $\theta=-2e^1$ \\ \hline

$\mathfrak r'_{3,\lambda}\times\mathbb R$ & $\omega=-e^1\^e^2+2\lambda e^1\wedge e^3-e^3\wedge e^4$ & $\omega= e^1\wedge e^4-e^2\wedge e^3$\\
($\lambda\neq0$)  & $\theta=\lambda e^1+e^4$ & $\theta=-2\lambda e^1$ \\ \hline

$\mathfrak r'_{3,0}\times\mathbb R$ & $\omega=e^1\wedge e^2+e^3\wedge e^4$ & \\
 & $\theta=e^4$ &\\ \hline

$\mathfrak r'_{4,\mu,\lambda}$ & & $\omega=e^1\wedge e^2+e^3\wedge e^4$ \\
($\lambda\neq0$) & & $\theta=-2\lambda e^1$ \\ \hline

\end{tabular} 
\bigskip
\caption{LCS structures of the first or second kind on $4$-dimensional Lie algebras 
}\label{1y2kind}
\end{center}
\end{table}

\

\subsection{Lattices in the associated LCS Lie groups}

\subsubsection{Lattices in dimension $4$}

In this section we will determine up to Lie algebra isomorphism the almost abelian Lie algebras in 
Theorem \ref{4-lcs} whose associated simply connected Lie groups admit lattices. In order
to do this we use the classification of unimodular completely solvable Lie groups of type $\mathbb
R\ltimes \mathbb R^3$ in \cite{LLSY} and Proposition \ref{latt}. 

\begin{teo}\label{4-lcs-latt}
Let $G$ be a simply connected $4$-dimensional unimodular almost abelian Lie group with a 
left invariant LCS structure, and let
$\g$ denote its Lie algebra. If $G$ admits lattices then $\g$ is isomorphic to one of the following 
Lie algebras:
$\mathfrak h_3\times\mathbb R, \, \mathfrak n_4,  \, \mathfrak r_{3,-1}\times\mathbb R, \, \mathfrak 
r'_{3,0}\times\mathbb R, \, \mathfrak r_{4,\lambda,-(1+\lambda)}$ for countably many values of 
$ \lambda >1$, or $\mathfrak r'_{4,\lambda,-\lambda/2}$ for countably many values of $\lambda >0$.
\end{teo}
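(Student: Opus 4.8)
The plan is to feed each of the relevant Lie algebras into Bock's criterion (Proposition \ref{latt}). By Remarks \ref{lcs-unimod}(ii) the unimodular almost abelian Lie algebras carrying an LCS structure are precisely $\mathfrak h_3\times\r$, $\mathfrak n_4$, $\mathfrak r_{3,-1}\times\r$, $\mathfrak r'_{3,0}\times\r$, $\mathfrak r_{4,\lambda,-(1+\lambda)}$, $\mathfrak r_{4,-1/2}$ and $\mathfrak r'_{4,\lambda,-\lambda/2}$, so it suffices to treat these one at a time. For each $\g=\r f_1\ltimes_M\r^3$ I would record the matrix $M=\ad_{f_1}|_{\r^3}$ in the Jordan form used in \cite{LLSY}, form $\phi(t_0)=e^{t_0 M}$, and ask whether $\phi(t_0)$ can be conjugated to an integer matrix for some $t_0\neq 0$. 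In every case but one $M$ has simple eigenvalues, so $\phi(t_0)$ is conjugate over $\r$ to the companion matrix of its characteristic polynomial; the question then becomes whether the polynomial $x^3-mx^2+nx-1$ can be given integer coefficients, exactly as in Subsection \ref{lat-dim-4}.

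First I would dispose of the cases that always admit lattices. For the nilpotent algebras $\mathfrak h_3\times\r$ and $\mathfrak n_4$ the operator $M$ is nilpotent, hence $\phi(t_0)$ is unipotent and Malcev's theorem (or a direct application of Proposition \ref{latt}) gives a lattice. For $\mathfrak r_{3,-1}\times\r$ one has $M=\mathrm{diag}(1,-1,0)$, so $\phi(t_0)$ has the distinct eigenvalues $e^{\pm t_0},1$, and it is enough to pick $t_0\neq 0$ with $e^{t_0}+e^{-t_0}\in\ZZ$: this makes the characteristic polynomial integral and $\phi(t_0)$ conjugate to a matrix in $SL(3,\ZZ)$. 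For $\mathfrak r'_{3,0}\times\r$ the matrix $\phi(t_0)$ is a plane rotation by $t_0$ together with a fixed direction, with characteristic polynomial $(x-1)(x^2-2\cos t_0\,x+1)$; a lattice exists since one may take any $t_0$ with $2\cos t_0\in\{-2,-1,0,1\}$.

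Next come the two one-parameter families. For $\mathfrak r_{4,\lambda,-(1+\lambda)}$, with $M=\mathrm{diag}(1,\lambda,-(1+\lambda))$, the eigenvalues $e^{t_0},e^{t_0\lambda},e^{-t_0(1+\lambda)}$ of $\phi(t_0)$ are distinct positive reals with product $1$. Conversely, from any pair $(m,n)\in\ZZ^2$ for which $x^3-mx^2+nx-1$ has three distinct positive real roots one recovers such an $M$ after taking logarithms and rescaling via Lemma \ref{ad-conjugated}; the ratios of the logarithms fix the isomorphism class, and the canonical representative places the parameter in the range $\lambda>1$. Since there are only countably many admissible $(m,n)$, this yields a countable set of such $\lambda$. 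For $\mathfrak r'_{4,\lambda,-\lambda/2}$ the matrix $M=\begin{pmatrix}\lambda&0&0\\0&-\lambda/2&-1\\0&1&-\lambda/2\end{pmatrix}$ has the same shape as the LCK matrix \eqref{mu-y}, with one real eigenvalue and a complex-conjugate pair, so the companion-matrix argument of Subsection \ref{lat-dim-4} and Theorem \ref{4-lattice} carries over verbatim and produces a countable set of values $\lambda>0$.

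The main obstacle, and the only genuine non-existence statement, is $\mathfrak r_{4,-1/2}$, where $M=\begin{pmatrix}1&0&0\\0&-1/2&1\\0&0&-1/2\end{pmatrix}$ is not diagonalizable. Here I would argue that $\phi(t_0)=e^{t_0 M}$ is non-diagonalizable for every $t_0\neq 0$, with a double eigenvalue $r=e^{-t_0/2}$ and a simple eigenvalue $r^{-2}=e^{t_0}$. If $\phi(t_0)$ were conjugate to an integer matrix, its characteristic polynomial $f(x)=(x-r^{-2})(x-r)^2$ would be monic in $\ZZ[x]$ with constant term $-1$. Because a double root of a polynomial over $\mathbb{Q}$ forces each of its Galois conjugates to be a double root and $\deg f=3$, the root $r$ cannot be irrational; being a rational root of a monic integer polynomial it is a positive integer, and since $r=e^{-t_0/2}\neq 1$ we get $r\geq 2$. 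But then the simple root $r^{-2}\in(0,1)$ is a non-integral rational root of a monic integer polynomial, which is impossible. Hence $\mathfrak r_{4,-1/2}$ admits no lattice, and assembling the cases above gives precisely the list in the statement.
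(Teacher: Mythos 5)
Your overall strategy is sound and is actually more self-contained than the paper's: where the paper disposes of the completely solvable families $\mathfrak r_{4,\lambda,-(1+\lambda)}$, $\mathfrak r_{4,1,-2}$ and $\mathfrak r_{4,-1/2}$ by identifying them with $\mathfrak{sol}^4_\lambda$, $\mathfrak{sol}^4_0$ and $\mathfrak{sol}'^4_0$ and quoting Propositions 2.1 and 2.2 of \cite{LLSY}, you run every case directly through Proposition \ref{latt} via the characteristic polynomial $x^3-mx^2+nx-1$, in the spirit of Subsection \ref{lat-dim-4}. The individual case analyses you do carry out are correct: the nilpotent cases, $\mathfrak r_{3,-1}\times\r$, $\mathfrak r'_{3,0}\times\r$, the reduction of $\mathfrak r'_{4,\lambda,-\lambda/2}$ to $\g_b$ with $b=1/\lambda$ and Theorem \ref{4-lattice}, and the double-root argument ruling out $\mathfrak r_{4,-1/2}$ all work.

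There is, however, one genuine gap. Your claim that ``in every case but one $M$ has simple eigenvalues'' and that $\mathfrak r_{4,-1/2}$ is ``the only genuine non-existence statement'' overlooks $\lambda=1$ in the family $\mathfrak r_{4,\lambda,-(1+\lambda)}$, i.e.\ the unimodular algebra $\mathfrak r_{4,1,-2}$ with $M=\operatorname{diag}(1,1,-2)$. This algebra is on the list of Remark \ref{lcs-unimod}(ii) (it is $\mathfrak{sol}^4_0$, whose group is $G_0$ from \S\ref{lat-dim-4}), it is not isomorphic to any $\mathfrak r_{4,\lambda,-(1+\lambda)}$ with $\lambda>1$, and for it the eigenvalues $e^{t_0},e^{t_0},e^{-2t_0}$ of $\phi(t_0)$ are \emph{not} distinct, so your ``distinct positive reals'' argument does not apply and the theorem's restriction to $\lambda>1$ is not justified as written. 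The fix is exactly your own double-root argument: if $\phi(t_0)$ were conjugate to an integer matrix, the double root $e^{t_0}$ of the monic integer characteristic polynomial would have to be a rational integer, forcing $e^{t_0}\ge 2$ (after replacing $t_0$ by $-t_0$ if necessary), and then $e^{-2t_0}$ would be a non-integral rational root of a monic integer polynomial, a contradiction. You should state this explicitly; without it the case list is incomplete and the conclusion ``$\lambda>1$'' is unproved for this family.
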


\begin{rem}
 It is easy to verify that if $\lambda'\in \{\lambda, -(1+\lambda), \frac1\lambda, 
-\frac{1}{1+\lambda},  -\frac{\lambda}{1+\lambda}, -\frac{\lambda+1}{\lambda}\}$ then $\mathfrak 
r_{4,\lambda',-(1+\lambda')}$ is isomorphic to $\mathfrak r_{4,\lambda,-(1+\lambda)}$; therefore we 
may consider $\lambda\geq 1$ for this family. In the same way, it is readily checked that 
$\mathfrak r'_{4,\lambda',-\lambda'/2}$ is isomorphic to $\mathfrak r'_{4,\lambda,-\lambda/2}$ if 
$\lambda'=\pm \lambda$, so that in this case we may assume $\lambda>0$.
\end{rem}

\begin{proof}
Let us consider the unimodular Lie algebras in Theorem \ref{4-lcs}, see Remark \ref{lcs-unimod}(ii).

In the nilpotent case, the simply connected nilpotent Lie groups corresponding to the Lie algebras
$\mathfrak h_3\times\mathbb R$ and $\mathfrak n_4$ admit lattices due to Malcev's criterion, since 
these Lie algebras have rational structure constants for some basis.

Next, we consider the case of the completely solvable non-nilpotent Lie algebras $\mathfrak 
r_{3,-1}\times\mathbb R$, $\mathfrak r_{4,\lambda,-(1+\lambda)}$ $(\lambda\geq1)$ and $\mathfrak 
r_{4,-1/2}$. It is well known that the simply connected Lie group $Sol^3$ corresponding to 
$\mathfrak r_{3,-1}\simeq \mathfrak{sol}^3$ admits lattices (see for instance \cite{MR}), and
therefore $Sol^3\times \mathbb R$ admits lattices as well. Now we use the classification given in 
\cite{LLSY}, noting that $\mathfrak r_{4,\lambda,-(1+\lambda)}\simeq \mathfrak{sol}^4_\lambda$ 
$(\lambda>1)$, $\mathfrak r_{4,1,-2}\simeq \mathfrak{sol}^4_0$ and $\mathfrak r_{4,-1/2}\simeq 
\mathfrak{sol}'^4_0$. The simply connected Lie group associated to $\mathfrak{sol}^4_\lambda$ admits 
lattices for countably many $\lambda's$ (see \cite[Proposition 2.1]{LLSY}), whereas the Lie groups 
$Sol^4_0$ and $Sol'^4_0$ associated to $\mathfrak{sol}^4_0$ and $\mathfrak{sol}'^4_0$ do not admit 
any lattices \cite[Proposition 2.2]{LLSY}. Note that $Sol^4_0=G_0$ from \S \ref{lat-dim-4}.

Finally, we take into account the non-completely solvable Lie algebras $\mathfrak 
r'_{3,0}\times\mathbb R$ and $\mathfrak r'_{4,\lambda,-\lambda/2}$ $(\lambda>0)$. It is well known 
that the simply connected Lie group $E(2)$ corresponding to $\mathfrak r'_{3,0}\simeq 
\mathfrak{e}(2)$ admits lattices (this fact is also an easy application of Proposition \ref{latt}), 
and therefore $E(2) \times \mathbb R$ admits lattices, too. The Lie algebra $\mathfrak
r'_{4,\lambda,-\lambda/2}$ $(\lambda>0)$ is isomorphic to $\g_{b}$ from \S \ref{lat-dim-4}, 
for $b=\frac{1}{\lambda}$, and it has already been proved in Theorem \ref{4-lattice} that the 
corresponding simply connected Lie groups $G_b$ admit lattices for countably many values of the 
parameter $b>0$.
\end{proof}

\smallskip

\begin{rem}
The Lie algebras $\mathfrak n_4$, $\mathfrak r_{3,-1}\times\mathbb R$, $\mathfrak 
r_{4,\lambda,-(1+\lambda)}$ for countably many values of $\lambda >1$ and $\mathfrak 
r'_{3,0}\times\mathbb R$ provide examples of solvmanifolds with an LCS structure which carry no LCK 
structure (coming from a left invariant LCK structure on the Lie group). We point out that in 
\cite{BM} it was shown that a nilmanifold associated to $\mathfrak n_4$ admits an LCS structure of 
the first kind but does not admit any LCK metric. Moreover, this nilmanifold is not the product of 
a $3$-dimensional compact manifold and a circle.
\end{rem}

\medskip

\subsubsection{Lattices in dimension $\geq 6$}

In this subsection we will provide examples of almost abelian solvmanifolds with an LCS structure. 
For $n\geq 2$, let $\g$ be the $(2n+2)$-dimensional unimodular almost abelian Lie algebra given by 
$\g=\mathbb Rf_1\ltimes_M\mathbb 
R^{2n+1}$ with 
\begin{equation}\label{M6}
M=\left(\begin{array}{c|ccccccccc}       
1 &  & & & & & & & \\
\hline
 & 0 & & & & & & &\\
 & & \frac{1}{n} & & & & & &\\
 & & & \ddots & & & & &\\
 & & & & \frac{n-1}{n} & & & &\\
 & & & & & -\frac{1}{n}  & & &\\
 & & & & & & -\frac{2}{n}  &  \\
 & & & & & & & \ddots & \\
 & & & & & & & & -1
\end{array}\right),\end{equation}
in a basis $\{f_2, u_1,\dots,u_n,v_1,\dots,v_n\}$ of $\r^{2n+1}$. Note that $M$ is given 
by \eqref{M6} with 
\begin{gather*} 
\mu=1, \quad  \lambda=-\frac{1}{2n}, \quad w=0, \\
 B=\operatorname{diag}\left(\frac{1}{2n}, 
\frac{3}{2n},\ldots, \frac{2n-1}{2n},-\frac{1}{2n}, -\frac{3}{2n},\ldots,-\frac{2n-1}{2n} 
\right)\in\mathfrak{sp}(n,\r).
\end{gather*}
According to Theorem \ref{lcs} the Lie algebra $\g$ admits an LCS form $\omega=f^1\^f^2 + 
\sum_{i=1}^{n} u^i\^v^i$ with Lee form $\theta=\frac 1n f^1$. From Corollary \ref{2kind} we have 
that this LCS structure is of the second kind.

The simply connected Lie group associated to $\g$ is $G=\mathbb R\ltimes_\phi\mathbb R^{2n+1}$ where 
$\phi$ is given by 
\[\phi(t)=e^{tM}=\left(\begin{array}{c|ccccccccc}       
e^t &  & & & & & & &  \\
\hline
 & 1 & & & & & & &\\
 & & e^{\frac{t}{n}} & & & & & &\\
 & & & \ddots & & & & &\\
 & & & & e^{\frac{(n-1)t}{n}} & & & &\\
 & & & & & e^{-\frac{t}{n}}  & & &\\
 & & & & & & e^{-\frac{2t}{n}} & &\\
 & & & & & & & \ddots & \\
 & & & & & & & & e^{-t}
\end{array}\right).\]

The characteristic polynomial of $\phi(t)$ is 
\[ p(x)=(x-1)(x-\rho^2)(x-\rho^{-2})(x-\rho^4)(x-\rho^{-4})\dots(x-\rho^{2n})(x-\rho^{-2n}), \]
where $\rho=\displaystyle{e^{\frac{t}{2n}}}$. Fixed $m\in\NN, m>2$, we define 
\[t_m=n\operatorname{arccosh}\left(\frac m2\right), \quad t_m>0.\]
Then $\rho_m=\displaystyle{e^{\frac{t_m}{2n}}}$ satisfies $\rho_m^2 + \rho_m^{-2}=m.$ 

We define the following real sequence $a_0=2$, $a_1=m$, $a_k=\rho_m^{2k} + \rho_m^{-2k}$ for 
$k=3,\dots,n$. It is easy to see that $a_k$ satisfies $a_{k+1}=ma_k-a_{k-1}$ and therefore 
$a_k\in\ZZ$ for all $k$. As a consequence we have that 
$(x-\rho_m^{2k})(x-\rho_m^{-2k})=x^2-a_kx+1\in \ZZ[x]$, thus we can write
\[p(x)=x^{2n+1}-m_{2n}x^{2n}+m_{2n-1}x^{2n-1}+ \dots +m_1 x-1\] for some 
$m_1,m_2,\dots,m_{2n}\in\ZZ$. Since all the roots of $p$ are different, we have that $\phi(t_m)$ is 
conjugated to the companion matrix $B$ of $p$, that is,
\begin{equation}\label{companion}
B_m=\begin{pmatrix}       
 0 & & & & 1 \\
 1 & 0 & & & -m_1 \\
 & \ddots&\ddots & & \vdots \\
 & & 1 & 0 & -m_{2n-1}\\
 & & & 1 & m_{2n}
\end{pmatrix}.
\end{equation}
According to Proposition \ref{latt}, the group $G$ admits lattices. One such lattice is 
given by
\[\Gamma_m := t_m\mathbb Z\ltimes_\phi P_m^{-1}\mathbb Z^{2n+1},\] 
for each $m>2$, where $P_m$ satisfies $P_m\phi(t_m)P_m^{-1}=B_m$. Therefore for all $m>2$, 
$M_m:=\Gamma_m\backslash G$ is a solvmanifold with an LCS structure. 


\medskip

\begin{prop}
With notation as above, the solvmanifolds $M_m$ with $m\in\NN, m>2$, are pairwise non 
homeomorphic. 
\end{prop}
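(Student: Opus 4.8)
The plan is to distinguish the $M_m$ by their fundamental groups. As recalled in Section~2, $\pi_1(M_m)\cong\Gamma_m$, and any homeomorphism $M_m\to M_{m'}$ induces an isomorphism $\pi_1(M_m)\cong\pi_1(M_{m'})$; hence it suffices to prove that $\Gamma_m\cong\Gamma_{m'}$ as abstract groups forces $m=m'$. Conjugating by $P_m$ identifies $\Gamma_m=t_m\ZZ\ltimes_\phi P_m^{-1}\ZZ^{2n+1}$ with the abstract polycyclic group $\Lambda_m:=\ZZ\ltimes_{B_m}\ZZ^{2n+1}$, in which a fixed generator $t$ of the $\ZZ$ factor acts on the normal subgroup $\ZZ^{2n+1}$ by the integer matrix $B_m$ of \eqref{companion}; recall that $B_m$ is conjugate to $\phi(t_m)$ and hence has eigenvalues $1,\rho_m^{\pm2},\rho_m^{\pm4},\dots,\rho_m^{\pm2n}$.

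First I would show that the normal subgroup $\ZZ^{2n+1}$ is characteristic in $\Lambda_m$ by identifying it with the Fitting subgroup, i.e. the maximal normal nilpotent subgroup. Being abelian and normal, $\ZZ^{2n+1}$ is contained in it. If the Fitting subgroup were strictly larger, its image in the quotient $\Lambda_m/\ZZ^{2n+1}\cong\ZZ$ would be nonzero, so it would contain an element $t^k v$ with $k\neq0$ and $v\in\ZZ^{2n+1}$; then the nilpotent subgroup $\langle \ZZ^{2n+1}, t^k v\rangle\cong\ZZ\ltimes_{B_m^k}\ZZ^{2n+1}$ would force $B_m^k$ to be unipotent. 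This is impossible, since $B_m^k$ has the eigenvalue $e^{kt_m}=\rho_m^{2nk}\neq1$. Thus the Fitting subgroup equals $\ZZ^{2n+1}$ and is preserved by every automorphism.

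Consequently, any isomorphism $\Lambda_m\to\Lambda_{m'}$ restricts to an isomorphism $\ZZ^{2n+1}\to\ZZ^{2n+1}$ given by some $Q\in GL(2n+1,\ZZ)$ and induces an automorphism of the quotient $\ZZ$, i.e. multiplication by $\varepsilon\in\{1,-1\}$. Comparing the conjugation actions on $\ZZ^{2n+1}$ then yields $QB_mQ^{-1}=B_{m'}^{\varepsilon}$, so $B_m$ is conjugate in $GL(2n+1,\ZZ)$ to $B_{m'}$ or to $B_{m'}^{-1}$. The trace is invariant under such conjugation, and it is also invariant under inversion: since the eigenvalue set $\{1,\rho_m^{\pm2k}\}_{k=1}^{n}$ is closed under $x\mapsto x^{-1}$, one has $\tr(B_m^{-1})=\tr(B_m)$. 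Hence $\tr(B_m)=\tr(B_{m'})$ is necessary for $\Lambda_m\cong\Lambda_{m'}$. Finally,
\[ \tr(B_m)=\tr(\phi(t_m))=1+\sum_{k=1}^{n}\bigl(\rho_m^{2k}+\rho_m^{-2k}\bigr), \]
and since $\rho_m^2$ is the larger root of $s+s^{-1}=m$, the number $\rho_m$ is strictly increasing in $m$, so each summand $\rho_m^{2k}+\rho_m^{-2k}$, and therefore $\tr(B_m)$, is strictly increasing in $m$. Thus $m\neq m'$ gives $\tr(B_m)\neq\tr(B_{m'})$, whence $\Gamma_m\not\cong\Gamma_{m'}$ and $M_m$ is not homeomorphic to $M_{m'}$. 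I expect the only delicate points to be establishing that $\ZZ^{2n+1}$ is characteristic, which the Fitting subgroup argument settles, and controlling the inversion ambiguity $B_{m'}\mapsto B_{m'}^{-1}$, which is neutralized by the inversion-invariance of the trace.
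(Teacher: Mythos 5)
Your proof is correct, and it reaches the paper's key intermediate conclusion --- that an isomorphism $\Gamma_m\cong\Gamma_{m'}$ forces $B_m$ to be conjugate in $GL(2n+1,\ZZ)$ to $B_{m'}$ or to $B_{m'}^{-1}$ --- by a genuinely different route. The paper gets there by Lie-theoretic rigidity: it invokes Saito's theorem (an isomorphism between lattices of simply connected completely solvable Lie groups extends to an isomorphism of the groups) and then cites \cite[Theorem 2.5]{Hu}. You instead argue purely inside the abstract polycyclic group $\Lambda_m=\ZZ\ltimes_{B_m}\ZZ^{2n+1}$, identifying $\ZZ^{2n+1}$ with the Fitting subgroup; the only point worth a word there is that the Fitting subgroup of a polycyclic group is itself nilpotent (equivalently, Fitting's theorem that the join of two normal nilpotent subgroups is nilpotent), which is what lets you pass from ``some $t^kv$ lies in a normal nilpotent subgroup'' to the non-unipotence contradiction. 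This makes the step self-contained and independent of the ambient Lie group. For the final step the paper only asserts ``it can be seen that this happens if and only if $r=s$,'' whereas you exhibit an explicit invariant: the trace, which is insensitive to the inversion ambiguity because the spectrum $\{1,\rho_m^{\pm 2},\dots,\rho_m^{\pm 2n}\}$ is closed under $x\mapsto x^{-1}$, and which is strictly increasing in $m$ since $\rho_m^2+\rho_m^{-2}=m$ with $\rho_m>1$. So your argument is both more elementary in the middle and more complete at the end; what the paper's route buys in exchange is a statement (Saito plus Huang) that applies uniformly to lattices in completely solvable groups without case-by-case group theory.
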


\begin{proof}
Let us assume that $M_r$ and $M_s$, with $r,s>2$ are homeomorphic. Then $\pi_1(M_r)$ is isomorphic 
to $\pi_1(M_s) $. Since $G$ is simply connected, we have that the fundamental groups of these 
solvmanifolds are isomorphic to the lattices $\Gamma_r$ and $\Gamma_s$, so that these lattices are 
isomorphic. Since $G$ is completely solvable, the Saito's rigidity theorem \cite{Sai} implies 
that this isomorphism extends to a Lie group automorphism of $G$. Since the lattices differ by an 
automorphism of $G$, it follows from \cite[Theorem 2.5]{Hu} that the integer matrix $B_r$ (as in 
\eqref{companion}) is conjugated either to $B_s$ or to its inverse in $GL(n,\mathbb{Z})$. It can be 
seen that this happens if and only if $r=s$.
\end{proof}

\medskip

\begin{rem}
It is easy to see that the LCS solvmanifolds $M_m$ do not admit invariant 
complex structures. However, they do admit invariant symplectic structures, for instance, 
$\eta= f^1\wedge u^1+f^2\wedge v^n +\sum_{i=1}^{n-1} u^{i+1}\wedge v^i$.
\end{rem}

\

\subsubsection{De Rham and adapted cohomology of the LCS solvmanifolds $\Gamma_m \backslash G$}
We begin with the study of the de Rham cohomology of the solvmanifold $M_m=\Gamma_m\backslash G$. 
Since $G$ is completely solvable, it follows from \eqref{deRham} that $H^k_{dR}(M_m)$ is isomorphic to 
the Lie algebra cohomology group $H^k(\g)$. Therefore, we need to study the Chevalley-Eilenberg 
complex of the almost abelian Lie algebra $\g=\mathbb Rf_1\ltimes_M\mathbb R^{2n+1}$ with $M$ as in 
\eqref{M6}. After a change of basis in the ideal $\u=\r^{2n+1}$, we can assume that $M$ is given by
\[ M=\operatorname{diag}\left(0, \frac1n,\frac2n,\ldots, 1, -\frac1n,-\frac2n,\ldots, -1\right). \]

According to \cite{Sa}, the $k^{th}$ Betti number of $\g$, $\beta^n_k=\dim H^k(\g)$, can be computed 
as follows, where $Z^j(\g)=\{\alpha\in\alt^j \g^* : d\alpha=0\}$:
\begin{equation}\label{betti} 
\beta^n_k=\dim H^k(\g)=\dim Z^k(\g)+\dim Z^{k-1}(\g)-\binom{2n+2}{k-1}.
\end{equation}

In order to compute $\dim Z^k(\g)$, note that we can decompose the ideal $\u$ as $\u=\r f_2 \oplus 
\g'$, where $f_2$ is a generator of the center of $\g$. Let us denote for simplicity $\mathfrak v 
:= \g'$. 

Recalling that $\beta^n_1=\dim(\g/[\g,\g])$, it follows that $\beta^n_1=2$. Indeed, a basis of 
$Z^1(\g)$ is given by $\{f^1,f^2\}$. Due to Poincar\'e duality, we get that 
$\beta^n_{2n+1}=\beta^n_1=2$. Clearly, $\beta^n_0=\beta^n_{2n+2}=1$.

Let us consider now $2\leq k \leq 2n$. Given a closed $k$-form $\alpha \in Z^k(\g)$, it can be 
decomposed uniquely as $\alpha=f^1\^\beta+f^2\^\gamma+\delta$, with $\beta\in\alt^{k-1}\u^*$, 
$\gamma\in\alt^{k-1} \mathfrak v^*$, $\delta\in\alt^k \mathfrak v^*$, with $d\gamma=0$ and 
$d\delta=0$. Therefore, we have a one to one correspondence 
\[Z^k(\g)\longleftrightarrow (\alt^{k-1}\u^*)\times (\alt^{k-1}\mathfrak v^*\cap Z^{k-1}(\g)) 
\times (\alt^k \mathfrak v^*\cap Z^k(\g)).\]

In order to compute $\dim(\alt^k \mathfrak v^*\cap Z^k(\g))$, we consider, for $1\leq p,q\leq n$,  
the set 
\[ \left\{ (i_1,\dots,i_p,j_1,\dots,j_q): 1\leq i_1<\dots<i_p\leq n,\, 1\leq 
j_1<\dots<j_q\leq n, \, \sum^p_{r=1}i_r=\sum^q_{s=1}j_s \right\},\] 
and let us denote by $d^n_{p,q}$ its cardinal. Setting $D^n_k := \displaystyle\sum_{p+q=k} 
d^n_{p,q}$ for $k\geq2$, it can be shown that $\dim\, (\alt^k \mathfrak v^*\cap Z^k(\g))=D_k^n$. 
Let us also define $d^n_{0,0}=1$, $d^n_{1,0}=d^n_{0,1}=0$, so that $D^n_0=1,\, D^n_1=0$. Using this 
together with \eqref{betti}, we readily obtain that 
\begin{equation}\label{beta}
\beta^n_k = D^n_{k-2}+2D^n_{k-1}+D^n_k, \qquad  k \geq 2.
\end{equation}

Even though we do not obtain an explicit formula for the Betti numbers of $M_m$, using \eqref{beta} 
we are able to obtain some properties concerning the parity of these numbers.

\medskip

It is easy to see that $d^n_{p,q}=d^n_{q,p}$ and $d^n_{p,q}=d^n_{n-p,n-q}$. We can summarize some 
properties about $d^n_{p,q}$ and $D^n_k$ in the following lemma.

\begin{lema}
\
\begin{enumerate}
 \item[$\ri$] $d^n_{1,1}=d^n_{n-1,n-1}=n$ and $d^n_{k,k}\equiv \binom{n}{k}$ 
$(\operatorname{mod} 2)$.
 \item[$\rii$] If $n$ is even then $d_{1,2}^{n}=\frac{n(n-2)}{4}$ and if $n$ 
is odd then $d_{1,2}^{n}=\left(\frac{n-1}{2}\right)^2$. 
 \item[$\riii$] $D^n_k=D^n_{2n-k}$ for any $k=0,\ldots,n$, with $D_0^n=1,\, D^n_1=0, \,D_2^n=n$.
\item[$\riv$] If $k$ is odd, then $D_k^n$ is even.
\item[$\rv$] $D_{2k}^n\equiv \binom{n}{k}$ $(\operatorname{mod} 2)$.
 \end{enumerate}
\end{lema}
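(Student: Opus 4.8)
The plan is to work throughout with the set-theoretic description of the quantities involved: $d^n_{p,q}$ is the number of pairs $(S,T)$ of subsets of $\{1,\dots,n\}$ with $|S|=p$, $|T|=q$ and $\sum_{s\in S}s=\sum_{t\in T}t$, the boundary conventions $d^n_{0,0}=1$, $d^n_{0,1}=d^n_{1,0}=0$ being exactly the values produced by this count when one of the subsets is empty. With this dictionary the two symmetries recalled before the lemma become transparent: $d^n_{p,q}=d^n_{q,p}$ is the involution $(S,T)\mapsto(T,S)$, while $d^n_{p,q}=d^n_{n-p,n-q}$ is complementation $(S,T)\mapsto(S^c,T^c)$, since passing to complements subtracts both sums from $\binom{n+1}{2}$ and hence preserves the equality of sums. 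These two bijections, together with a single explicit count, will yield every assertion.

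For \ri, the identity $d^n_{1,1}=n$ is immediate, as the admissible pairs are exactly $(\{i\},\{i\})$ for $1\le i\le n$, and then $d^n_{n-1,n-1}=d^n_{1,1}=n$ by complementation. For the congruence I would apply the involution $(S,T)\mapsto(T,S)$ to the set of pairs counted by $d^n_{k,k}$: its fixed points are precisely the diagonal pairs $(S,S)$, of which there are $\binom{n}{k}$, while all remaining pairs fall into two-element orbits; counting modulo $2$ gives $d^n_{k,k}\equiv\binom{n}{k}\pmod 2$. For \rii\ I would compute $d^n_{1,2}$ directly: for a fixed value $i$ of the singleton the two-element subsets $\{j_1,j_2\}$ with $j_1<j_2$ and $j_1+j_2=i$ number $\lfloor(i-1)/2\rfloor$ (and the constraint $j_2\le n$ is automatic since $j_2<i\le n$), so that $d^n_{1,2}=\sum_{i=1}^{n}\lfloor(i-1)/2\rfloor=\sum_{m=0}^{n-1}\lfloor m/2\rfloor$; evaluating this arithmetic sum and splitting according to the parity of $n$ produces $\frac{n(n-2)}{4}$ when $n$ is even and $\left(\frac{n-1}{2}\right)^2$ when $n$ is odd.

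The statements about $D^n_k=\sum_{p+q=k}d^n_{p,q}$ then follow formally from the two symmetries. For \riii, complementation $(p,q)\mapsto(n-p,n-q)$ is a bijection from $\{(p,q):p+q=k\}$ onto $\{(p,q):p+q=2n-k\}$ under which $d^n_{p,q}$ is invariant, giving $D^n_k=D^n_{2n-k}$; the values $D^n_0=1$, $D^n_1=0$, $D^n_2=n$ come from the boundary conventions together with $d^n_{1,1}=n$. For \riv\ and \rv\ I would use the transposition symmetry to pair the term $(p,q)$ with $(q,p)$ in the sum defining $D^n_k$: when $k$ is odd no term is self-paired (that would force $p=q=k/2$), so $D^n_k$ is a sum of equal pairs and hence even; when $k=2j$ is even the unique self-paired term is $d^n_{j,j}$, whence $D^n_{2j}\equiv d^n_{j,j}\pmod 2$, and combining with the congruence from \ri\ gives $D^n_{2j}\equiv\binom{n}{j}\pmod 2$.

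The only genuinely computational step is the evaluation of the floor sum in \rii, where the parity split must be handled carefully; the rest is bookkeeping. The main conceptual point to get right is the mod-$2$ count in \ri, since it is the engine behind \rv: one must verify that the transposition acts freely off the diagonal so that only the $\binom{n}{k}$ fixed pairs survive modulo $2$. I would also confirm at the outset that the boundary conventions for $d^n_{p,q}$ agree with the subset-counting interpretation, so that both symmetries and the pairing arguments apply uniformly, including to the extreme terms in each $D^n_k$.
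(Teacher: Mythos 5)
Your proof is correct. The paper states this lemma without proof, but the two symmetries $d^n_{p,q}=d^n_{q,p}$ and $d^n_{p,q}=d^n_{n-p,n-q}$ that it records immediately beforehand are exactly the engine of your argument, so you have supplied what is evidently the intended reasoning: the transposition involution with its $\binom{n}{k}$ diagonal fixed points for the mod-$2$ statements in \ri\ and \rv, complementation for \riii, the free pairing off the diagonal for \riv, and the direct floor-sum evaluation $\sum_{m=0}^{n-1}\lfloor m/2\rfloor$ for \rii, all of which check out (including the boundary conventions and the observation that $j_2\le n$ is automatic in \rii).
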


\medskip

Taking this lemma into account, the proof of the next result is straightforward.

\begin{prop}
\
\begin{itemize}
 \item[$\ri$] $\beta^n_0=1$, $\beta^n_1=2$ and $\beta^n_2=n+1$. 
 \item[$\rii$] If $n$ is even, then $\beta^n_3=\frac{n(n+1)}{2}$ and if $n$ is odd then 
$\beta^n_3=\frac{(n+1)^2}{2}$.
 \item[$\riii$] If $k$ is odd, then $\beta^n_k$ is even. 
 \item[$\riv$] $\beta^n_{2k}\equiv \binom{n+1}{k}$ $(\operatorname{mod} 2)$.
\end{itemize}
 \end{prop}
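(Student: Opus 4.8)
The plan is to obtain every assertion by substituting the data of the preceding lemma into the recursion \eqref{beta}, that is $\beta^n_k=D^n_{k-2}+2D^n_{k-1}+D^n_k$ for $k\ge 2$, treating the extreme degrees separately since they fall outside this range. For part $\ri$ the equalities $\beta^n_0=1$ and $\beta^n_1=\dim(\g/[\g,\g])=2$ have already been recorded, and $\beta^n_2$ comes from setting $k=2$ in \eqref{beta}: using $D^n_0=1$, $D^n_1=0$ and $D^n_2=n$ from part $\riii$ of the lemma gives $\beta^n_2=1+0+n=n+1$.

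For part $\rii$ I would put $k=3$ in \eqref{beta}. Here $D^n_1=0$, $D^n_2=n$, and $D^n_3=\sum_{p+q=3}d^n_{p,q}=d^n_{1,2}+d^n_{2,1}=2d^n_{1,2}$, where I use that $d^n_{p,q}$ vanishes when one index is $0$ and the other positive, together with the symmetry $d^n_{p,q}=d^n_{q,p}$. This reduces the computation to $\beta^n_3=2n+2d^n_{1,2}$, and substituting the two values of $d^n_{1,2}$ provided by part $\rii$ of the lemma, according to the parity of $n$, completes this part.

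Parts $\riii$ and $\riv$ are congruences, so I would read \eqref{beta} modulo $2$, where the middle term drops out and $\beta^n_k\equiv D^n_{k-2}+D^n_k\pmod{2}$. For $\riii$, when $k$ is odd both $k$ and $k-2$ are odd, so part $\riv$ of the lemma forces $D^n_k$ and $D^n_{k-2}$ to be even and hence $\beta^n_k$ is even; the remaining odd degrees $k=1$ and $k=2n+1$ equal $2$ by the value of $\beta^n_1$ and Poincar\'e duality. For $\riv$, writing $k=2j$ and invoking part $\rv$ of the lemma, one has $D^n_{2j}\equiv\binom{n}{j}$ and $D^n_{2j-2}=D^n_{2(j-1)}\equiv\binom{n}{j-1}\pmod{2}$, so that $\beta^n_{2j}\equiv\binom{n}{j-1}+\binom{n}{j}=\binom{n+1}{j}\pmod{2}$ by Pascal's rule, the case $j=0$ being the already known $\beta^n_0=1=\binom{n+1}{0}$.

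Since all the combinatorial weight has been packaged into the lemma, I do not expect any serious obstacle here; the only points needing attention are the boundary degrees $k=0,1$ and the top two degrees, which lie outside the range $k\ge 2$ of \eqref{beta} and must be handled directly (via $\beta^n_1=2$ and Poincar\'e duality), and the clean use of Pascal's identity $\binom{n}{j-1}+\binom{n}{j}=\binom{n+1}{j}$ that converts the congruence $D^n_{2j}\equiv\binom{n}{j}$ of part $\rv$ into the statement of $\riv$.
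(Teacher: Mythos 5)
Your overall strategy is exactly the one the paper intends: the paper gives no written proof, asserting only that the result is ``straightforward'' from the preceding lemma via \eqref{beta}, and your plan of substituting $D^n_0=1$, $D^n_1=0$, $D^n_2=n$, reducing $D^n_3$ to $2d^n_{1,2}$, reading \eqref{beta} modulo $2$ for \riii\ and \riv, and finishing \riv\ with Pascal's rule $\binom{n}{j-1}+\binom{n}{j}=\binom{n+1}{j}$ is precisely that computation. Parts \ri, \riii\ and \riv\ are correct as you describe them, including your care with the boundary degrees.

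However, in part \rii\ you assert that substituting the lemma's values of $d^n_{1,2}$ ``completes this part,'' and for even $n$ it does not: carrying out the arithmetic you set up gives
\[
\beta^n_3 \;=\; 2n+2d^n_{1,2} \;=\; 2n+\tfrac{n(n-2)}{2} \;=\; \tfrac{n(n+2)}{2},
\]
which is \emph{not} the value $\tfrac{n(n+1)}{2}$ claimed in the statement. The paper's own Table \ref{betti-dR} confirms $\tfrac{n(n+2)}{2}$: it lists $\beta^2_3=4$, $\beta^4_3=12$, $\beta^6_3=24$, whereas $\tfrac{n(n+1)}{2}$ would give $3$, $10$, $21$. (The odd case works out: $2n+2\left(\tfrac{n-1}{2}\right)^2=\tfrac{(n+1)^2}{2}$, matching $\beta^3_3=8$ and $\beta^5_3=18$.) So the printed formula for even $n$ is in error, and your proof as written cannot deliver it; by leaving the final substitution unevaluated you missed the discrepancy. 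You should either have flagged the statement as containing a typo (with the corrected value $\tfrac{n(n+2)}{2}$) or noticed that your reduction contradicts it. Everything else --- including the implicit convention $d^n_{p,0}=d^n_{0,p}=0$ for $p\geq 1$, which is the right reading of the definition since a nonempty sum of distinct positive integers cannot vanish --- is fine.
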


\medskip
 
\begin{rem}
According to Lucas' Theorem, the parity of the binomial coefficient $\binom{r}{s}$ can be 
determined from the binary representation of $r$ and $s$. Indeed, if $r=\sum_{i=1}^m a_i2^i$ and 
$s=\sum_{i=1}^m b_i2^i$ with $a_i,b_i\in\{0,1\}$, $a_m \neq 0$, then 
\[ \binom{r}{s} \equiv \prod_{i=1}^m \binom{a_i}{b_i}\, (\operatorname{mod} 2), \]
where $\binom{a_i}{b_i}=0$ if $a_i<b_i$. In particular, $\binom{r}{s}$ is even if and only there 
exists $i$ such that $a_i=0$ and $b_i=1$. For instance, if $r=2^l-1$ then $\binom{r}{s}$ is odd for 
any $0\leq s\leq r$, and if $r=2^l$ then $\binom{r}{s}$ is even for any $1\leq s\leq r-1$. 
\end{rem}

We exhibit in Table \ref{betti-dR} the Betti numbers in low dimensions. For instance, in the 
$6$-dimensional case we obtain $\beta_0=\beta_6=1,\,\beta_1=\beta_5=2, \,\beta_2=\beta_4=3, \, 
\beta_3=4$.

\begin{table}[h]
\begin{center}
\renewcommand{\arraystretch}{1.2}
\begin{tabular}{|c|c|c|}\hline
$\;n\; $ & $\dim\g$ & Betti numbers \\ \hline
 2 & 6 & (1,\,2,\,3,\,4,\,3,\,2,\,1) \\
 3 & 8 & (1,\,2,\,4,\,8,\,10,\,8,\,4,\,2,\,1) \\
 4 & 10 & (1,\,2,\,5,\,12,\,20,\,24,\,20,\,12,\,5,\,2,\,1) \\
 5 & 12 & (1,\,2,\,6,\,18,\,37,\,56,\,64,\,56,\,37,\,18,\,6,\,2,\,1) \\
 6 & 14 & (1,\,2,\,7,\,24,\,61,\,116,\,167,\,188,\,167,\,116,\,61,\,24,\,7,\,2,\,1)\\
 \hline

\end{tabular} 
\bigskip
\caption{Betti numbers}\label{betti-dR}
\end{center}
\end{table}

\bigskip

Next, we will study the adapted cohomology of the solvmanifold $M_m=\Gamma_m\backslash G$. 
Since $G$ is completely solvable, it follows from \eqref{kill_adapted} that 
$H^k_{\theta}(M_m)$ is isomorphic to the Lie algebra adapted cohomology group $H^k_{\theta}(\g)$. We 
will work in the same way as in the de Rham cohomology case.

Let $\tilde\beta^n_k=\dim H^k_{\theta}(\g)$ be the $k^{th}$ adapted Betti number of $\g$.
These numbers satisfy an equation as \eqref{betti}. Indeed, setting 
$Z^j_{\theta}(\g)=\{\alpha\in\alt^j \g^* : d_{\theta}\alpha=0\}$ we have
\begin{equation}\label{bettitilde}
\tilde\beta^n_k=\dim H^k_{\theta}(\g)=\dim Z^k_{\theta}(\g)+\dim 
Z^{k-1}_{\theta}(\g)-\binom{2n+2}{k-1}.
\end{equation}

Note that $\dim Z^1_{\theta}(\g)=2$. Let us denote by $\tilde f^1$ and $\tilde f^2$ the generators 
of $Z^1_{\theta}(\g)$ where $\tilde f^1$ coincides with $f^1$ from above. Then we can decompose the 
ideal $\u$ as $\u=\r \tilde f_2 \oplus \mathfrak v$ for some $2n$-dimensional abelian subalgebra 
$\mathfrak v$. It is easy to see that 
\[ \tilde\beta^n_1=1, \qquad \tilde\beta^n_2=n+1. \] 

Let us consider now $2\leq k \leq 2n$. Given a $d_\theta$-closed $k$-form $\alpha \in 
Z^k_{\theta}(\g)$, it can be decomposed uniquely as $\alpha=\tilde f^1\^\beta+\tilde 
f^2\^\gamma+\delta$, with $\beta\in\alt^{k-1}\u^*$, 
$\gamma\in\alt^{k-1} \mathfrak v^*$, $\delta\in\alt^k \mathfrak v^*$, with $d_\theta\gamma=0$ and 
$d_\theta\delta=0$. Therefore, we have a one to one correspondence 
\[Z^k_{\theta}(\g)\longleftrightarrow (\alt^{k-1}\u^*)\times (\alt^{k-1}\mathfrak v^*\cap Z^{k-1}_{\theta}(\g)) 
\times (\alt^k \mathfrak v^*\cap Z^k_{\theta}(\g)).\]

In order to compute $\dim(\alt^k \mathfrak v^*\cap Z^k_{\theta}(\g))$, we consider, for $1\leq p,q\leq n$,  
the set
\[ \left\{ (i_1,\dots,i_p,j_1,\dots,j_q): 1\leq i_1<\dots<i_p\leq n,\, 1\leq 
j_1<\dots<j_q\leq n, \, \sum^p_{r=1}i_r + 1=\sum^q_{s=1}j_s \right\},\] 
and let us denote by $\tilde d^n_{p,q}$ its cardinal. Setting $\tilde D^n_k := \displaystyle\sum_{p+q=k} 
\tilde d^n_{p,q}$ for $k\geq2$, it can be shown that $\dim\, (\alt^k \mathfrak v^*\cap Z^k_{\theta}(\g))=\tilde D_k^n$. Let us also define $\tilde d^n_{0,0}=\tilde d^n_{1,0}=0$, $\tilde d^n_{0,1}=1$, so that $\tilde D^n_0=0,\, \tilde D^n_1=1$.

From \eqref{bettitilde}, we readily obtain that 
\begin{equation}\label{betatilde}
\tilde \beta^n_k = \tilde D^n_{k-2}+2\tilde D^n_{k-1}+\tilde D^n_k, \qquad  k \geq 2.
\end{equation}

It is easy to see that $\tilde d^n_{p,q}=\tilde d^n_{n-q,n-p}$. Then  we have that $\tilde 
D^n_k=\tilde D^n_{2n-k}$ and therefore 
\[ \tilde \beta^n_k=\tilde \beta^n_{2n-k}, \qquad \sum_{k=0}^{2n+2}(-1)^k \tilde  \beta^n_k=0.  \]
Using \eqref{betatilde} and these facts we can compute the adapted Betti numbers in some 
low-dimensional cases, which are shown in Table \ref{betti-adapted}. 

\begin{table}[h]
\begin{center}
\renewcommand{\arraystretch}{1.2}
\begin{tabular}{|c|c|c|}\hline
$\;n\; $ & $\dim\g$ & adapted Betti numbers \\ \hline
 2 & 6 & (0,\,1,\,3,\,4,\,3,\,1,\,0) \\
 3 & 8 & (0,\,1,\,4,\,7,\,8,\,7,\,4,\,1,\,0) \\
 4 & 10 & (0,\,1,\,5,\,12,\,19,\,22,\,19,\,12,\,5,\,1,\,0) \\
 5 & 12 & (0,\,1,\,6,\,17,\,35,\,55,\,63,\,55,\,35,\,17,\,6,\,1,\,0) \\
 6 & 14 & (0,\,1,\,7,\,24,\,59,\,112,\,165,\,188,\,165,\,112,\,59,\,24,\,7,\,1,\,0)\\
 \hline

\end{tabular} 
\bigskip
\caption{Adapted Betti numbers}\label{betti-adapted}
\end{center}
\end{table}

\

\

\end{document}